\let\@wraptoccontribs\wraptoccontribs
\DeclareMathOperator{\Cls}{Cls} 
\DeclareMathOperator{\Ann}{Ann}
\DeclareMathOperator{\ord}{ord}
\DeclareMathOperator{\Cl}{Cl}
\DeclareMathOperator{\mass}{mass}
\def\Q{\mathbb{Q}}
\def\Z{\mathbb{Z}}
\newcommand{\cM}{{\mathcal M}}
\newcommand{\cQ}{{\mathcal Q}}
\newcommand{\fra}{{\mathfrak a}}
\newcommand{\fraka}{{\mathfrak a}}
\newcommand{\frakp}{{\mathfrak p}}
\newcommand{\frakN}{{\mathfrak N}}
\newcommand{\frf}{{\mathfrak f}}
\newcommand{\p}{{\mathfrak p}}
\renewcommand{\bar}{\overline}
\newcommand{\set}[1]{\left\lbrace#1\right\rbrace }
\newcommand{\calO}{O}
\newcommand{\sO}{\mathsf{O}}
\newcommand{\sR}{\mathsf{R}}
\newcommand{\sL}{\mathsf{L}}
\newcommand{\OR}[1]{\sO_\sR\!\left(#1\right)}
\newcommand{\OL}[1]{\sO_\sL\!\left(#1\right)}
\newcommand{\ccR}[2]{\left(#1:#2\right)_\sR}
\newcommand{\ccL}[2]{\left(#1:#2\right)_\sL}
\newcommand{\simR}{\sim_\sR}
\newcommand{\dual}[1]{#1^{\sharp}}
\DeclareMathOperator{\trd}{trd}
\DeclareMathOperator{\nrd}{nrd}
\DeclareMathOperator{\discrd}{discrd}
\DeclareMathOperator{\condi}{(SEM)}
\DeclareMathOperator{\condii}{(LPP)}
\newtheorem{thm}{Theorem}[section] 
\newtheorem{lemma}[thm]{Lemma}     
\newtheorem{cor}[thm]{Corollary}
\newtheorem{prop}[thm]{Proposition}
\newtheorem*{mainresult}{Main Result}
\newtheorem{df}[thm]{Definition}
\newtheorem{remark}[thm]{Remark}
\newtheorem{example}[thm]{Example}
\author{Stefano Marseglia}
\address{Mathematical Institute, Utrecht University, The Netherlands}
\email{s.marseglia@uu.nl}
\author{Harry Smit}
\address{Department of Mathematics, University of Pennsylvania, United States of America}
\email{hsmit@sas.upenn.edu}
\address{Department of Mathematics, Dartmouth College, 6188 Kemeny Hall, Hanover, NH 03755, USA}
\email{jvoight@gmail.com}
\title{Ideal classes of orders in quaternion algebras}
\keywords{Quaternion algebras, orders, ideal classes, algorithms}
\subjclass{
11R52, 
11Y40
}
\begin{document}

\begin{abstract}
    We provide an algorithm that, given any order $O$ in a quaternion algebra over a global field, computes representatives of all right equivalence classes of right $O$-ideals, including the non-invertible ones.
    The theory is developed for a more general kind of algebras.
\end{abstract}

\maketitle


\section{Introduction}

Due to the unpredictable nature of ideal classes, algorithms to compute them have been actively sought after.
Arguably the most extensively studied case is that of the ideal classes of a maximal order in a number field.
These classes form an abelian group, known as the class group of the number field.
Even though many questions are still unanswered, efficient algorithms to compute them have been developed.
From the class group, thanks to the results contained in~\cite{KluenersPauli05}, we can efficiently deduce the group of classes of invertible ideals for any order in a number field.

In this paper we leave the relatively comfortable world of number fields for the one of finite-dimensional algebras over a field.
Quaternion algebras are a natural first stop. Here several important steps in the direction of understanding ideal classes have already been made.
Eichler orders, that is, intersections of two maximal orders, in a quaternion algebra over a number field, were the first to be considered, see~\cite{KirschmerVoight10}.
With restrictions on the number field and the quaternion algebras, an algorithm
to compute ideal classes for Bass orders, a family of orders that includes the Eichler orders, was developed in~\cite{PacettiSirolli14}.
Note that for a Bass order every right ideal is invertible (although possibly over a bigger order).
Finally, the algorithms contained in~\cite{KirschmerVoight10} 
are generalized in Appendix \ref{appendix:invidls},
in fact completely solving the problem of computing invertible right ideal classes for any order in a quaternion algebra.

Computations of such ideal classes have applications in several areas of number theory and arithmetic geometry.
In particular, we highlight the following two applications:
\begin{itemize}
    \item modular forms. Through Brandt matrices, ideal classes of orders in quaternion algebras produce modular forms. This was turned into an algorithm by Pizer~\cite{Pizer80}, and then generalized by several others: see for example Kohel~\cite{Kohel01}, Socrates--Whitehouse~\cite{SocratesWhitehouse05}, Demb\'el\'e~\cite{Dembele07} and Demb\'el\'e--Donnelly~\cite{DembeleDonnelly08}.
    For a survey, see Demb\'el\'e--Voight~\cite{DembeleVoight13}.
    \item abelian varieties over finite fields.
    The first author in~\cite{MarAV} and Oswal--Shankar in~\cite{OswalShankar} use both invertible and non-invertible ideal classes of orders in (products of) number fields to compute isomorphism classes of certain abelian varieties.
    In~\cite{XueYuSuperSpecial1},~\cite{XueYuSuperSpecial2} and~\cite{XueYuSuperSpecial3}, Xue--Yang--Yu and Xue--Yu--Zheng compute isomorphism classes of superspecial abelian surfaces using ideal classes of orders in quaternion algebras.
\end{itemize}

So far, the first application only makes use of the invertible ideal classes,
while the second gives a reason to compute the isomorphism classes of non-invertible ideals as well.
Such ideals are much harder to understand: they are not locally principal, and their existence is connected to the (bad) singularities of the order.
Xue--Yang--Yu and Xue--Yu--Zheng encounter only orders that are either Bass or close enough to being Bass that they can
produce a formula for the number of ideal classes.
On the other hand, if one wants to generalize this approach to higher-dimensional abelian varieties with quaternionic endomorphism algebras, one will presumably have to deal with orders where an algorithmic approach is worthwhile. In fact, this situation arises already for the commutative case of the first author and Oswal-Shankar.
In the commutative case, algorithms to compute all ideal classes are provided in~\cite{MarICM20} and improved in~\cite{MarCMType22_preprint}.

In this paper we deal with the non-commutative setting.
For missing definitions we refer the reader to the beginning of Section~\ref{sec:lattices}.
The main contribution of this paper is the following result.
\begin{mainresult}\label{mainresult}
    Let $R$ be a Dedekind domain whose field of fractions is a global field $F$, and
    let $O$ be an $R$-order in a quaternion algebra over $F$. 
    We give an algorithm that
    computes representatives of all right equivalence classes of right $O$-ideals, including the non-invertible ones.
\end{mainresult}
It turns out that it is easier to understand non-invertible ideals if we replace the notion of right equivalence with a coarser version.
Informally speaking, we say that two ideals
 are \emph{weakly right equivalent} if they are equal up to multiplication by an invertible ideal from the left.
In other words, the notion of weak right equivalence trivializes the invertible part of an ideal.
For example, an ideal is invertible if and only if it is weakly right equivalent to its right order.

To compute all the right equivalence classes of $O$, we compute the weak right equivalence classes with given right order $O$ and then incorporate the information from the invertible right equivalence classes. See Theorem~\ref{thm:righteqclasses} for more details.

We now give an overview of the paper. Aiming for greater generality, in each section we assume only the hypotheses we actually need, gradually adding them to end up with a working algorithm.
We stress that even though before we talked about orders in quaternion algebras over global fields, these constitute only a special case of our theory.

For a finite-dimensional algebra $B$ over the fraction field of a Dedekind domain, we introduce two variants of the notion of right equivalence in Section~\ref{sec:lattices}: weak right equivalence and local right equivalence. See Definitions~\ref{def:weakeq} and~\ref{def:loceq}, respectively.


In Section~\ref{sec:comparison} we compare local right equivalence and weak right equivalence.
In general, these two notions do not coincide as we show in Example~\ref{ex:kap}.
This difference essentially arises because the notion of invertibility and local principality are not always equivalent.
See Proposition~\ref{prop:loc_eq_same_wek_eq}.
In the same section we furthermore introduce two technical conditions $\condi$ and $\condii$, see Definitions~\ref{df:condi} and~\ref{df:condii}, that guarantee correctness of our algorithm to compute the weak right equivalence classes.
We remark that such conditions are satisfied if $B$ is commutative or has a standard involution, for example if $B$ is a quaternion algebra.

In Section~\ref{sec:duality}, under the hypothesis that our algebra $B$ is separable, together with $\condi$ and $\condii$, we give a method to compute the weak right equivalence classes.
Under some mild finiteness hypotheses, satisfied for example if $B$ is defined over a global field, in Section~\ref{sec:algorithms} we turn this method into an actual algorithm, and provide the pseudocode.
In the same section, we describe an algorithm that, given representatives of the weak right equivalence classes and a method to compute the right equivalence classes of invertible ideals, returns the right equivalence classes of all ideals, both invertible and non-invertible.
Section \ref{sec:brandtmatrices} contains an example: we use the algorithms to compute (extensions of) Brandt matrices and obtain some modular forms.
The implementation in Magma~\cite{Magma} of the algorithm, and code to reproduce the examples are available at
 \url{https://github.com/harryjustussmit/IdlClQuat}.

\subsection*{Acknowledgments}
The first author is supported by NWO grant VI.Veni.202.107.
The second author would like to thank the Max Planck Institute for Mathematics in Bonn.
The authors express their gratitude to Markus Kirschmer and John Voight for useful comments on a preliminary version of the paper. 

\section{Lattices\label{sec:lattices}}
Let $R$ be a Dedekind domain with fraction field $F$.
Let $B$ a finite-dimensional $F$-algebra. An $R$-{\bf lattice} in~$B$ is a finitely generated sub-$R$-module of $B$ that contains a basis of $B$. An $R$-{\bf order} of $B$ is an $R$-lattice that is also a subring (containing~$1$). When no confusion on the Dedekind domain $R$ can arise, we will simplify the terminology and write lattice and order.

Let $I$ and $J$ be lattices.
We say that $I$ is {\bf right equivalent} to $J$, and write $I \simR J$, if there exists $\alpha \in B^\times $ such that $I = \alpha J$.
The {\bf right colon} $\ccR{I}{J}$ is defined as $\set{x \in B : Jx \subseteq I}$. Similarly, the {\bf left colon } $\ccL{I}{J}$ is defined as $\set{x \in B : xJ \subseteq I}$.
The {\bf right order} $\OR{I}$ of $I$ is $\ccR{I}{I}$ and the {\bf left order} $\OL{I}$ of $I$ is $\ccL{I}{I}$. We say that $I$ is a {\bf right} (resp.~{\bf left}) $O$-{\bf ideal} if $O \subseteq \OR{I}$ (resp.~$O \subseteq \OL{I}$), and that $I$ is an $O$-$O'$-{\bf ideal} if $I$ is a right $O'$-ideal and a left $O$-ideal.

\begin{lemma} \label{lemma:colonalpha}
    Let $I$, $J$, and $K$ be lattices and $\alpha,\beta \in B^\times$.
    Then
    \begin{enumerate}[\normalfont(i)]
        \item \label{lemma:colonalpha:0}
            $\ccL{\alpha I}{\beta J} = \alpha \ccL{I}{J} \beta^{-1}$ and $\ccR{I\alpha}{J\beta} = \beta^{-1}\ccR{I}{J}\alpha$.
        \item \label{lemma:colonalpha:2}
             $\ccL{I}{J\alpha} = \ccL{I\alpha^{-1}}{J}$ and $\ccR{I}{\alpha J} = \ccR{\alpha^{-1}I}{J}$.
        \item \label{lemma:colonalpha:3}
            $\ccL{\ccL{I}{J}}{K} = \ccL{I}{KJ}$ and $\ccR{\ccR{I}{J}}{K} = \ccR{I}{JK}$.
        \item \label{lemma:colonalpha:5} $\ccL{\ccR{I}{J}}{K} = \ccR{\ccL{I}{K}}{J}$.
        \item \label{lemma:colonalpha:6}
            $I\ccL{J}{K} \subseteq \ccL{IJ}{K}$ and $\ccR{J}{K}I\subseteq \ccR{JI}{K}$.
        \item \label{lemma:colonalpha:7} $\ccL{I}{J}$ is an $\OL{I}$-$\OL{J}$-ideal, and $\ccR{I}{J}$ is an $\OR{J}$-$\OR{I}$-ideal.
    \end{enumerate}
\end{lemma}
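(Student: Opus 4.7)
The proof plan is to unfold the definitions of left and right colon in each case and perform a direct set-theoretic manipulation. All six items are essentially formal consequences of the definition $\ccL{I}{J}=\{x\in B : xJ\subseteq I\}$ (and symmetrically for the right colon), so the argument will consist of chains of equivalences/inclusions. For each identity I will prove the ``left'' version and note that the ``right'' version follows by an identical argument with multiplication swapped.

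For (i), the computation is $x\in \ccL{\alpha I}{\beta J}\iff x\beta J\subseteq \alpha I\iff \alpha^{-1}x\beta\in\ccL{I}{J}\iff x\in \alpha\,\ccL{I}{J}\,\beta^{-1}$, using that $\alpha,\beta\in B^\times$ so that multiplication by $\alpha^{-1}$ and $\beta$ are bijections on $B$. Item (ii) is the same idea with only one unit involved. For (iii) I will chase $x\in \ccL{\ccL{I}{J}}{K}\iff xK\subseteq \ccL{I}{J}\iff (xK)J\subseteq I\iff x(KJ)\subseteq I\iff x\in\ccL{I}{KJ}$, where the crucial (and only nontrivial) step is the associativity $(xK)J=x(KJ)$, which holds because products of $R$-submodules of the associative $F$-algebra $B$ are associative.

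For (v), I combine the two styles: $x\in \ccL{\ccR{I}{J}}{K}\iff xK\subseteq \ccR{I}{J}\iff J(xK)\subseteq I\iff (Jx)K\subseteq I\iff Jx\subseteq \ccL{I}{K}\iff x\in \ccR{\ccL{I}{K}}{J}$, again relying only on associativity. For (vi), given $y\in \ccL{J}{K}$ we have $yK\subseteq J$, hence for any $a\in I$ the computation $(ay)K=a(yK)\subseteq aJ\subseteq IJ$ shows $ay\in\ccL{IJ}{K}$, proving $I\,\ccL{J}{K}\subseteq\ccL{IJ}{K}$.

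For (vii), to see that $\ccL{I}{J}$ is a left $\OL{I}$-ideal I take $a\in\OL{I}$ and $y\in\ccL{I}{J}$ and observe $(ay)J=a(yJ)\subseteq aI\subseteq I$, so $ay\in\ccL{I}{J}$; hence $\OL{I}\subseteq\OL{\ccL{I}{J}}$. For the right $\OL{J}$-ideal statement I take $y\in\ccL{I}{J}$ and $b\in\OL{J}$ and compute $(yb)J=y(bJ)\subseteq yJ\subseteq I$. The statement for $\ccR{I}{J}$ is symmetric. No step presents a real obstacle; the only thing to be careful about is bookkeeping of which side each module acts on, and consistently using associativity of products of submodules in $B$, which is where I would pay most attention to avoid sign- or side-errors.
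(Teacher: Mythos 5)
Your proposal is correct and follows essentially the same route as the paper: every item is proved by unwinding the definition of the colon and using associativity of module products in $B$, exactly as in the paper's proof (your two-step verification of part (vii) is just the paper's single chain $zxyJ\subseteq zxJ\subseteq zI\subseteq I$ split into its left and right halves). No gaps.
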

\begin{proof}
    We will only prove one side for the two-sided parts of the lemma.
    By definition we have
    \begin{align*}
        \ccL{\alpha I}{\beta J} =
        \set{ x \in B : x\beta J\subseteq \alpha I } &=
        \set{ x \in B : \alpha^{-1}x\beta J\subseteq I } \\&=
        \set{ \alpha y \beta^{-1} \in B : yJ\subseteq I } =
        \alpha \ccL{I}{J} \beta^{-1},
    \end{align*}
    which proves Part~\ref{lemma:colonalpha:0}.
    For Part~\ref{lemma:colonalpha:2} we observe that
    \[
        \ccL{I}{J \alpha} =
        \set{ x \in B : x J \alpha \subseteq I } =
        \set{ x \in B : x J \subseteq I \alpha^{-1} } =
        \ccR{I \alpha^{-1}}{J}.
    \]
    Part~\ref{lemma:colonalpha:3} follows from
    \[
      \ccL{\ccL{I}{J}}{K} = \set{ x \in B : x K \subseteq \ccL{I}{J} } = \set{ x \in B : x K J \subseteq I } =  \ccL{I}{KJ}.
    \]
    For part~\ref{lemma:colonalpha:5}, observe that
    \[
    \ccL{\ccR{I}{J}}{K} = \set{ x \in B : JxK \subseteq I } = \ccR{\ccL{I}{K}}{J}.
    \]
    For part~\ref{lemma:colonalpha:6} note that $I\ccL{J}{K}K \subseteq IJ$.
    For part~\ref{lemma:colonalpha:7} take $x\in \ccL{I}{J}$, $y\in\OL{J}$ and $x\in \OL{I}$.
    Then
    \[ zxy J \subseteq zxJ \subseteq zI \subseteq I. \qedhere\]
\end{proof}

Let $\p$ be a maximal ideal of $R$ and denote by $R_\p$ the localization of $R$ at $\p$. Note that the fraction field of $R_{\p}$ is $F$. The {\bf localization} of an $R$-lattice $I$ at $\p$ is the $R_\p$-lattice $IR_\p$ (in $B$), and we denote the localization by $I_\p$. 
Localization commutes with addition, multiplication, and taking the colon of lattices.
Furthermore, two lattices are equal if and only if their localizations are equal, see for example~\cite[Thm.~9.4.9]{JV}.

An $R$-lattice $I$ is {\bf principal} if $I=\alpha\OR{I}$ for some $\alpha \in B$, or, equivalently, $I=\OL{I}\alpha$.
We say $I$ is {\bf locally principal} if for every maximal ideal $\p$ of $R$ the $R_{\p}$-lattice $I_{\p}$ is principal.

\begin{lemma}\label{lem:pullback_lattices}
    Let $O \subseteq O'$ be orders.
    If $J$ is a locally principal $R$-lattice with $\OR{J} = O'$ then there exists a locally principal $R$-lattice $I$ with $\OR{I} = O$ such that $IO' = J$.
\end{lemma}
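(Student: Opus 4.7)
The plan is to construct $I$ prime by prime and then glue. First, I would note that since $O \subseteq O'$ are both $R$-orders in $B$, the quotient $O'/O$ is a finitely generated torsion $R$-module (after tensoring with $F$ both localize to $B$), so there is a finite set $S$ of maximal ideals of $R$ outside of which $O_\p = O'_\p$. Then at each $\p$, since $J$ is locally principal, I would pick $\alpha_\p \in B^\times$ with $J_\p = \alpha_\p O'_\p$, and declare the prescribed local lattice to be $I_\p := \alpha_\p O_\p$. Because $\alpha_\p O_\p = \alpha_\p O'_\p = J_\p$ whenever $\p \notin S$, the family $(I_\p)_\p$ agrees with $(J_\p)_\p$ at all but finitely many primes.

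Next, I would invoke the standard local-global principle for $R$-lattices in $B$: a family of $R_\p$-lattices that agrees with a fixed $R$-lattice at cofinitely many primes is the family of localizations of a unique $R$-lattice (see e.g.\ \cite[Ch.~9]{JV}, also referenced in the paragraph preceding the statement). This yields an $R$-lattice $I \subseteq B$ with $I R_\p = \alpha_\p O_\p$ for every $\p$.

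Finally, I would verify the three conclusions by localizing, using that colons, products, and local principality are detected prime by prime. Directly from the definitions, $\OR{\alpha_\p O_\p} = O_\p$, so $\OR{I} = O$. Since $1 \in O_\p \subseteq O'_\p$ forces $O_\p O'_\p = O'_\p$, we get $(IO')_\p = \alpha_\p O_\p O'_\p = \alpha_\p O'_\p = J_\p$ for every $\p$, hence $IO' = J$. Local principality of $I$ is built into the construction.

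The only step that is more than bookkeeping is the gluing, which is a standard fact about lattices in a finite-dimensional algebra over the fraction field of a Dedekind domain; with that cited, I do not anticipate a genuine obstacle.
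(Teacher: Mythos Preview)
Your proposal is correct and follows essentially the same approach as the paper: pick local generators $\alpha_\p$ for $J_\p$, set $I_\p = \alpha_\p O_\p$, invoke the local--global principle for lattices (\cite[Thm.~9.4.9]{JV}) to glue, and verify $\OR{I}=O$ and $IO'=J$ locally. The only cosmetic difference is that the paper arranges $\alpha_\p=1$ for almost all $\p$ (so the family agrees with $O$ cofinitely), whereas you observe $O_\p=O'_\p$ for almost all $\p$ (so the family agrees with $J$ cofinitely); either variant suffices for the gluing.
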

\begin{proof}
    For every maximal ideal $\p$ there is an element $\alpha_{\p} \in B$ such that $J_{\p} = \alpha_{\p} O'_{\p}$. By~\cite[Thm.~9.4.9]{JV} we can choose $\alpha_{\p} = 1$ for all but finitely many $\p$.

    Again by~\cite[Thm.~9.4.9]{JV}, there exists an $R$-lattice $I$ such that $I_{\p} = \alpha_{\p} O_{\p}$.
    For any maximal ideal $\p$ we have $(IO')_{\p} = I_{\p} O'_{\p} = \alpha_{\p} O'_{\p} = J_{\p}$, hence $IO' = J$.
    Moreover, $\OR{I}_{\p} = \OR{I_{\p}} = O_{\p}$, thus $\OR{I} = O$.
\end{proof}

Two lattices $I$ and $J$ are called {\bf compatible} if $\OR{I}=\OL{J}$.
We say the lattice $I$ is {\bf right invertible} if there exists a lattice $I'$ such that $II'=\OL{I}$ and $I$ and $I'$ are compatible.
Similarly, $I$ is {\bf left invertible} if there exists a lattice $I'$ such that $I'I=\OR{I}$ and $I'$ and $I$ are compatible.
We say that $I$ is {\bf invertible} if it is both left and right invertible.

For a lattice $I$ we define the {\bf quasi-inverse} $I^{-1}$ as
\[ I^{-1}:=\set{ x \in B : I x I \subseteq I }. \]
Taking the quasi-inverse commutes with localization.
\begin{lemma}\label{lemma:char_qinv}
  For any lattice $I$ the following hold.
    \begin{enumerate}[\normalfont(i)]
        \item \label{lemma:char_qinv:1} $I^{-1} = \ccL{\OR{I}}{I}=\ccR{\OL{I}}{I}$.
        \item \label{lemma:char_qinv:2} $\OL{I} \subseteq \OR{I^{-1}}$ and $\OR{I} \subseteq \OL{I^{-1}}$.
    \end{enumerate}
\end{lemma}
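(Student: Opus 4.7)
The plan is to prove (i) by directly unpacking the definitions, after which (ii) falls out of Lemma~\ref{lemma:colonalpha}(vii) together with the trivial observation that any order is its own left and right order.

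For (i), the key point is the adjunction-style equivalence: for $x \in B$,
\[ IxI \subseteq I \iff xI \subseteq \OR{I} \iff Ix \subseteq \OL{I}. \]
To justify the first equivalence, if $xI \subseteq \OR{I}$ then left-multiplying by $I$ gives $IxI \subseteq I\OR{I} \subseteq I$; conversely, from $IxI \subseteq I$ one reads off that for every $i\in I$ we have $I(xi) \subseteq IxI \subseteq I$, so $xi \in \OR{I}$ and hence $xI \subseteq \OR{I}$. The second equivalence is the symmetric argument with $\OL{I}I \subseteq I$. The two right-hand conditions unpack to $x \in \ccL{\OR{I}}{I}$ and $x \in \ccR{\OL{I}}{I}$ respectively, yielding (i).

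For (ii), it is enough to use the identity $I^{-1} = \ccL{\OR{I}}{I}$ from (i). By Lemma~\ref{lemma:colonalpha}(vii), this colon is an $\OL{\OR{I}}$-$\OL{I}$-ideal. Since $\OR{I}$ is an order, we have $\OL{\OR{I}} = \OR{I}$ (because an order $O$ contains $1$ and is closed under multiplication, so $\OL{O} = \OR{O} = O$). Thus $I^{-1}$ is a left $\OR{I}$-ideal and a right $\OL{I}$-ideal, which is exactly $\OR{I} \subseteq \OL{I^{-1}}$ and $\OL{I} \subseteq \OR{I^{-1}}$.

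No step presents a real obstacle here; the only thing one must notice is that a single characterization in (i) already carries both halves of (ii) through Lemma~\ref{lemma:colonalpha}(vii), since that lemma simultaneously records the left- and right-ideal structure of a colon. The symmetric equality $I^{-1} = \ccR{\OL{I}}{I}$ in (i) is not strictly needed for (ii), but it is nevertheless the natural dual statement to record.
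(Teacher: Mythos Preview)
Your proof is correct and is exactly the unpacking the paper has in mind: the paper's own proof reads simply ``Both follow from the definition,'' and what you have written is a faithful expansion of that. Your use of Lemma~\ref{lemma:colonalpha}\ref{lemma:colonalpha:7} for part~(ii) is a clean shortcut, though one could equally verify the two inclusions directly from the colon descriptions in~(i) in one line each.
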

\begin{proof}
    Both follow from the definition.
\end{proof}
Notice that the compatibility of the product $II^{-1}$ is not equivalent to the compatibility of $I^{-1}I$, as we show in Example~\ref{ex:sidedinvproj4}.

The next lemma is a small extension of \cite[Prop. 16.7.4]{JV} by showing that if the inverse of a lattice exists, it is unique.
\begin{lemma}\label{lemma:uniquesidedinv}
    Let $I$ be a lattice. Then
    if $I$ is right (resp.~left) invertible then the right (resp.~left) inverse is unique and equal to $I^{-1}$.
\end{lemma}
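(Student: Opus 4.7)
The plan is to prove the right-invertible case; the left-invertible case is symmetric. So suppose $I'$ is a right inverse of $I$, meaning $II' = \OL{I}$ with $\OR{I} = \OL{I'}$ (the compatibility condition). I will show $I' = I^{-1}$ by proving two inclusions, each relying on one of the two characterizations in Lemma~\ref{lemma:char_qinv}\ref{lemma:char_qinv:1}, namely $I^{-1} = \ccR{\OL{I}}{I} = \ccL{\OR{I}}{I}$.

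First, for $I' \subseteq I^{-1}$: given $y \in I'$, we have $Iy \subseteq II' = \OL{I}$, so $y \in \ccR{\OL{I}}{I} = I^{-1}$.

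Second, for $I^{-1} \subseteq I'$: given $x \in I^{-1}$, the characterization $I^{-1} = \ccL{\OR{I}}{I}$ gives $xI \subseteq \OR{I}$. Multiplying on the right by $I'$ and using compatibility $\OR{I} = \OL{I'}$, we get
\[ xII' \subseteq \OR{I}\, I' = \OL{I'}\, I' \subseteq I'. \]
On the other hand $xII' = x\OL{I}$, which contains $x$ since $1 \in \OL{I}$; hence $x \in I'$.

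Uniqueness then follows: any right inverse equals $I^{-1}$. The only subtlety will be keeping track of which colon description of $I^{-1}$ to invoke at each inclusion; this choice is forced by whether one has control of $I \cdot (\text{something})$ or $(\text{something}) \cdot I$, together with the compatibility identity $\OR{I} = \OL{I'}$.
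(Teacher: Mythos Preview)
Your proof is correct and follows essentially the same approach as the paper's: both inclusions are obtained via the two characterizations $I^{-1} = \ccR{\OL{I}}{I} = \ccL{\OR{I}}{I}$ from Lemma~\ref{lemma:char_qinv}\ref{lemma:char_qinv:1}, with the inclusion $I^{-1} \subseteq I'$ established by multiplying $I^{-1}I \subseteq \OR{I} = \OL{I'}$ on the right by $I'$ and using $II' = \OL{I} \ni 1$. The only cosmetic difference is that you argue elementwise while the paper works at the level of lattice products.
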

\begin{proof}
    Assume that $I$ is right invertible, that is, there exists a lattice $I'$ such that $II'$ is compatible and $II'=\OL{I}$.

    By Lemma~\ref{lemma:char_qinv}.\ref{lemma:char_qinv:1} and the equality $\OR{I}=\OL{I'}$ we have
    \[ I^{-1}I = \ccL{\OR{I}}{I}I  = \ccL{\OL{I'}}{I}I\subseteq \OL{I'}. \]
    Now, multiplying by $I'$ on the right, we obtain
    \[ I^{-1}II' \subseteq \OL{I'}I' = I', \]
    which leads to
    \[ I^{-1}\OL{I} \subseteq I'. \]
    Since clearly $I^{-1}\subseteq I^{-1}\OL{I}$, we see $I^{-1}\subseteq I'$.
    For the other inclusion, note that $II'=\OL{I}$ implies $I'\subseteq \ccR{\OL{I}}{I}$. Now $I'\subseteq I^{-1}$ follows from Lemma~\ref{lemma:char_qinv}.\ref{lemma:char_qinv:1}.
    The statement for left invertibility follows analogously.
\end{proof}

\begin{remark}
    Occasionally in the literature (for example in~\cite{Kap69}), a lattice $I$ is called left invertible if $I^{-1}I=\OR{I}$.
    In particular, the product is not required to be compatible.
    In Section~\ref{sec:comparison}, we call this weaker notion left projectivity, where we also show that if the algebra has a standard involution (as assumed in~\cite{Kap69}) then the two notions coincide (Proposition~\ref{prop:invC1C2}).
\end{remark}

The following lemma is a more general version of~\cite[Lem.~16.5.11]{JV}.
\begin{lemma}\label{lemma:orderinvmult}
    Let $I$ and $L$ be lattices.
    \begin{enumerate}[\normalfont(i)]
       \item \label{lemma:orderinvmultleft} If $L$ is right invertible and $\OL{L}\subseteq \OR{I}$, then $\OL{I} = \OL{IL}$.
       \item \label{lemma:orderinvmultright} If $L$ is left invertible and $\OR{L}\subseteq \OL{I}$, then $\OR{I} = \OR{LI}$.
    \end{enumerate}
\end{lemma}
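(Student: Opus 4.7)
The plan is to prove part (i) directly by showing both inclusions, and then remark that part (ii) follows by a symmetric argument (or by applying part (i) to opposite algebras).

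The easy direction $\OL{I} \subseteq \OL{IL}$ is immediate: if $xI \subseteq I$ then $xIL \subseteq IL$, so any $x \in \OL{I}$ satisfies $x \in \OL{IL}$. This requires no hypothesis on $L$.

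For the reverse inclusion I would use the right inverse of $L$, which exists and equals $L^{-1}$ by Lemma~\ref{lemma:uniquesidedinv}, together with the compatibility $\OR{L} = \OL{L^{-1}}$ coming from right invertibility. Take $x \in \OL{IL}$, so $xIL \subseteq IL$. Multiplying on the right by $L^{-1}$ yields $x I L L^{-1} \subseteq I L L^{-1}$, i.e.\ $x I \OL{L} \subseteq I \OL{L}$. The next step is to observe that $I \OL{L} = I$: the inclusion $I \subseteq I\OL{L}$ holds because $1 \in \OL{L}$, while the inclusion $I\OL{L} \subseteq I$ uses the hypothesis $\OL{L} \subseteq \OR{I}$, since any $y \in \OL{L}$ then satisfies $Iy \subseteq I$. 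Combining these gives $xI \subseteq I$, i.e., $x \in \OL{I}$, as required.

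Part (ii) is the mirror image: starting from $x \in \OR{LI}$ one multiplies $LIx \subseteq LI$ on the left by the (unique) left inverse $L^{-1}$ of $L$, uses the compatibility $\OL{L} = \OR{L^{-1}}$, and then collapses $\OR{L} I = I$ via the hypothesis $\OR{L} \subseteq \OL{I}$.

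I do not expect a real obstacle; the only subtlety is keeping track of which compatibility condition comes with which one-sided invertibility (so that $LL^{-1}$ genuinely equals $\OL{L}$ and not merely lies inside it), and being careful that the hypothesis $\OL{L} \subseteq \OR{I}$ is exactly what lets the absorption $I \OL{L} = I$ go through. The argument is otherwise a direct manipulation using the definitions of left/right order and the uniqueness of the one-sided inverse established in Lemma~\ref{lemma:uniquesidedinv}.
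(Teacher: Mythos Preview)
Your proof is correct and essentially identical to the paper's. The paper also proves only part (i), notes $\OL{I}\subseteq\OL{IL}$ is immediate, uses $\OL{L}\subseteq\OR{I}$ to get $I=I\OL{L}$, and then for $x\in\OL{IL}$ writes the chain $xI = xI\OL{L} = xILL' \subseteq ILL' = I\OL{L} = I$ with $L'$ a right inverse of $L$; the only cosmetic difference is that you invoke Lemma~\ref{lemma:uniquesidedinv} to name the inverse $L^{-1}$, whereas the paper simply takes any right inverse $L'$ (which suffices since all that is used is $LL'=\OL{L}$).
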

\begin{proof}
    We only prove Part~\ref{lemma:orderinvmultleft}.
    By definition of the left order we have $\OL{I}\subseteq \OL{IL}$.
    Note that the assumption $\OL{L}\subseteq \OR{I}$ gives us $I=I\OL{L}$.
    To prove the converse inclusion, let $x\in \OL{IL}$ and let $L'$ be the right inverse of $L$.
    We obtain
    \[ xI = xI\OL{L}=xILL' \subseteq ILL' = I \OL{L} = I. \]
    Hence $\OL{IL} \subseteq \OL{I}$, which concludes the proof.
\end{proof}

\begin{lemma}\label{lemma:compatibleproductinvertible}
    Let $I$ and $J$ be compatible lattices. Assume that $I$ is left invertible and $J$ is right invertible.

    \begin{enumerate}[\normalfont(i)]
        \item \label{lemma:compatibleproductinvertible:1} If $I$ is also right invertible, then $IJ$ is right invertible.
        \item \label{lemma:compatibleproductinvertible:2} If $J$ is also left invertible, then $IJ$ is left invertible.
        \item \label{lemma:compatibleproductinvertible:3} If $I$ and $J$ are invertible, then $IJ$ is invertible.
    \end{enumerate}
\end{lemma}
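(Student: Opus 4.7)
The plan is to propose $K := J^{-1}I^{-1}$ as the inverse in every case; by Lemma~\ref{lemma:uniquesidedinv} any sided inverse of $IJ$ must coincide with the quasi-inverse, so $K$ is really the only candidate one can try. For each of the three parts the task splits into verifying the multiplicative identity $(IJ)K = \OL{IJ}$ or $K(IJ) = \OR{IJ}$, and confirming the compatibility condition on the product; the latter is where one has to be careful.

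For part (i), first use Lemma~\ref{lemma:orderinvmult}(i) with $L = J$ (right invertible, with $\OL{J} = \OR{I}$ by compatibility of $I$ and $J$) to obtain $\OL{IJ} = \OL{I}$. Then compute
\[
(IJ)(J^{-1}I^{-1}) = I(JJ^{-1})I^{-1} = I\,\OL{J}\,I^{-1} = I\,\OR{I}\,I^{-1} = II^{-1} = \OL{I} = \OL{IJ},
\]
using compatibility together with right invertibility of $I$ (in the form $II^{-1} = \OL{I}$, from Lemma~\ref{lemma:uniquesidedinv}). For compatibility of the product: Lemma~\ref{lemma:orderinvmult}(ii) applied with $L=I$ (left invertible, with $\OR{I} = \OL{J}$) yields $\OR{IJ} = \OR{J}$. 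Because $I$ is assumed also right invertible here, $I^{-1}$ is left invertible, and since $\OR{J^{-1}} = \OL{J} = \OR{I} = \OL{I^{-1}}$, Lemma~\ref{lemma:orderinvmult}(i) applied with $L=I^{-1}$ gives $\OL{J^{-1}I^{-1}} = \OL{J^{-1}} = \OR{J} = \OR{IJ}$, as required.

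Part (ii) is the mirror image: from $\OR{I} = \OL{J}$, left invertibility of $I$, and the relation $I^{-1}I = \OR{I}$, one gets
\[
(J^{-1}I^{-1})(IJ) = J^{-1}(I^{-1}I)J = J^{-1}\,\OR{I}\,J = J^{-1}\,\OL{J}\,J = J^{-1}J = \OR{J};
\]
by Lemma~\ref{lemma:orderinvmult}(ii) with $L = I$ this equals $\OR{IJ}$. For compatibility I compute $\OL{IJ} = \OL{I}$ (as in part (i)) and $\OR{J^{-1}I^{-1}} = \OR{I^{-1}} = \OL{I}$ via Lemma~\ref{lemma:orderinvmult}(ii) applied with $L = J^{-1}$ (which is left invertible since $J$ is right invertible, with matching left order). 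Part (iii) is then a free combination: the hypotheses of (i) and (ii) together are exactly invertibility of $I$ and $J$, and $K = J^{-1}I^{-1}$ serves simultaneously as left and right inverse of $IJ$.

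The main obstacle is not the multiplicative computations, which reduce, after applying compatibility, to the defining relations $II^{-1} = \OL{I}$ or $I^{-1}I = \OR{I}$; it is rather the bookkeeping needed to match the compatibility hypotheses of Lemma~\ref{lemma:orderinvmult}. In each application one has to identify which of the two factors is invertible on the appropriate side and verify the containment of adjacent orders; the virtue of working with $J^{-1}I^{-1}$ is that the relevant containments follow automatically from compatibility of $I,J$ plus the fact that the quasi-inverses have orders swapped on the two sides, as recorded in Lemma~\ref{lemma:char_qinv}.
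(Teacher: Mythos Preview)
Your approach is exactly the paper's: take $J^{-1}I^{-1}$ as the candidate inverse, verify the product identities directly from compatibility and sided invertibility, and establish the compatibility conditions via Lemma~\ref{lemma:orderinvmult}. Two cosmetic slips to tidy: in part~(i) the application of Lemma~\ref{lemma:orderinvmult}(i) with $L=I^{-1}$ requires $I^{-1}$ to be \emph{right} invertible (not left as you wrote), and the equality $\OR{J^{-1}}=\OL{J}$ you assert is in general only the inclusion $\OL{J}\subseteq\OR{J^{-1}}$ from Lemma~\ref{lemma:char_qinv}(ii)---both are harmless here, since $I$ is two-sided invertible (hence so is $I^{-1}$) and the inclusion is all the hypothesis of Lemma~\ref{lemma:orderinvmult} demands, which is precisely how the paper argues; the analogous remark applies to your justification that $J^{-1}$ is left invertible in part~(ii), which again needs the full invertibility of $J$ rather than right invertibility alone.
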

\begin{proof}
    Under the assumptions of Part~\ref{lemma:compatibleproductinvertible:1}, we have
    \[
        IJ (J^{-1}I^{-1}) = I \OL{J} I^{-1} = I \OR{I} I^{-1} = I I^{-1} = \OL{I} = \OL{IJ},
    \]
    where the first equality holds by right invertibility of $J$, the second by the compatibility of $I$ and $J$, the fourth one by right invertibility of $I$, and the last one by Lemma~\ref{lemma:orderinvmult}.\ref{lemma:orderinvmultleft}.
    We show that $IJ$ and $J^{-1}I^{-1}$ are compatible. This follows as $\OR{IJ} = \OR{J}$ by Lemma~\ref{lemma:orderinvmult}.\ref{lemma:orderinvmultright},
    $\OR{J} = \OL{J^{-1}}$ by Lemma~\ref{lemma:uniquesidedinv}, and $\OL{J^{-1}} = \OL{J^{-1}I^{-1}}$ by  Lemma~\ref{lemma:orderinvmult}.\ref{lemma:orderinvmultleft}. The conditions of the lemma are met as
    \[
    \OL{I^{-1}} = \OR{I} = \OL{J} \subseteq \OR{J^{-1}},
    \]
    where the first equality holds by Lemma~\ref{lemma:uniquesidedinv}, the second by compatibility of $I$ and $J$, and the inclusion by Lemma~\ref{lemma:char_qinv}.\ref{lemma:char_qinv:2}.

    For Part~\ref{lemma:compatibleproductinvertible:2}, we follow the same strategy, noting that
    \[
        (J^{-1}I^{-1})IJ = J^{-1} \OR{I} J  = J^{-1} \OL{J} J  = J^{-1} J = \OR{J} = \OR{IJ}.
    \]
    Compatibility of $J^{-1}I^{-1}$ and $IJ$ follows from
    \[
    \OR{J^{-1} I^{-1}} = \OR{I^{-1}} = \OL{I} = \OL{IJ}.
    \]
    Lastly, Part~\ref{lemma:compatibleproductinvertible:3} follows directly from Parts~\ref{lemma:compatibleproductinvertible:1} and~\ref{lemma:compatibleproductinvertible:2}.
\end{proof}

The next two lemmas deal with the interaction between invertible lattices and colons.
\begin{lemma}\label{lemma:coloninvnum}
    Let $I$, $J$ and $L$ be lattices with $L$ invertible and $\OR{L}\subseteq \OL{J}$.
    Then $\ccL{LJ}{I} = L\ccL{J}{I}$.
\end{lemma}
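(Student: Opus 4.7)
The plan is to prove the equality by a two-sided inclusion, where one direction is an immediate consequence of the general colon-inclusion already recorded in Lemma~\ref{lemma:colonalpha}.\ref{lemma:colonalpha:6}, and the other direction exploits the invertibility of $L$ to reverse-multiply.

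For the inclusion $L\ccL{J}{I} \subseteq \ccL{LJ}{I}$, I would simply quote Lemma~\ref{lemma:colonalpha}.\ref{lemma:colonalpha:6} applied with $L$, $J$, $I$ in the roles of $I$, $J$, $K$: this says $L\ccL{J}{I} \subseteq \ccL{LJ}{I}$, with no further hypothesis needed.

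For the reverse inclusion, take $x \in \ccL{LJ}{I}$, i.e.\ $xI \subseteq LJ$. Because $L$ is invertible, Lemma~\ref{lemma:uniquesidedinv} says its two-sided inverse is $L^{-1}$, with $L^{-1}L = \OR{L}$ and $LL^{-1} = \OL{L}$, both products compatible. Multiplying $xI\subseteq LJ$ on the left by $L^{-1}$ and using the hypothesis $\OR{L}\subseteq \OL{J}$, we obtain
\[
L^{-1}xI \subseteq L^{-1}LJ = \OR{L}J \subseteq \OL{J}J = J,
\]
so $L^{-1}x \subseteq \ccL{J}{I}$. Left-multiplying this inclusion by $L$ yields $\OL{L}x = LL^{-1}x \subseteq L\ccL{J}{I}$, and since $1 \in \OL{L}$ we conclude $x \in L\ccL{J}{I}$.

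The only subtle point is the legality of ``multiplying on the left by $L^{-1}$ and then by $L$'' and collapsing the products $LL^{-1}$ and $L^{-1}L$ to the left and right orders of $L$; this is exactly what invertibility (together with Lemma~\ref{lemma:uniquesidedinv}) provides, and it is why the weaker notion of one-sided invertibility or projectivity would not be enough to conclude. Everything else is a straightforward chain of inclusions.
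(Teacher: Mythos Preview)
Your proof is correct and follows essentially the same approach as the paper's: one inclusion comes from Lemma~\ref{lemma:colonalpha}.\ref{lemma:colonalpha:6}, and the other is obtained by left-multiplying by $L^{-1}$, using $\OR{L}\subseteq\OL{J}$ to collapse $L^{-1}LJ$ to $J$, and then multiplying back by $L$. The only cosmetic difference is that the paper works with the whole lattice $\ccL{LJ}{I}$ at once and recovers it from $LL^{-1}\ccL{LJ}{I}=\OL{L}\ccL{LJ}{I}$ via $\OL{L}\subseteq\OL{LJ}$ and Lemma~\ref{lemma:colonalpha}.\ref{lemma:colonalpha:7}, whereas you argue elementwise and finish with the simpler observation $1\in\OL{L}$.
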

\begin{proof}
    Multiplying $\ccL{LJ}{I}I \subseteq LJ$ by $L^{-1}$ on the left leads to
    \[ L^{-1} \ccL{LJ}{I}I \subseteq L^{-1}LJ = \OR{L} J \subseteq \OL{J} J = J. \]
    We get $L^{-1} \ccL{LJ}{I} \subseteq \ccL{J}{I}$, which multiplied on the left by $L$ gives the inclusion
    \[ L L^{-1} \ccL{LJ}{I} \subseteq L \ccL{J}{I}. \]
    However,
    \[
    L L^{-1} \ccL{LJ}{I} = \OL{L}\ccL{LJ}{I} = \ccL{LJ}{I},
    \]
    as the first equality follows from right invertibility of $L$ and the second follows from $\OL{L} \subseteq \OL{LJ}$ and Lemma~\ref{lemma:colonalpha}.\ref{lemma:colonalpha:7}. We conclude that $\ccL{LJ}{I} \subseteq  L \ccL{J}{I}$. Equality follows from Lemma~\ref{lemma:colonalpha}.\ref{lemma:colonalpha:6}.
\end{proof}
\begin{lemma}\label{lemma:coloninvden}
    Let $I$, $J$ and $L$ be lattices with $L$ invertible and $\OR{L}\subseteq \OL{I}$.
    Then $\ccL{J}{LI} = \ccL{J}{I}L^{-1}$.
\end{lemma}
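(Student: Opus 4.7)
The plan is to prove the equality by two inclusions, manipulating products of lattices directly, in the spirit of the preceding Lemma~\ref{lemma:coloninvnum}. The key ingredients I will use are the invertibility identities $L^{-1}L = \OR{L}$ and $LL^{-1} = \OL{L}$ from Lemma~\ref{lemma:uniquesidedinv}, together with the hypothesis $\OR{L}\subseteq \OL{I}$, which allows $\OR{L}$ to be absorbed into the identity $\OL{I}I = I$.

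For the inclusion $\ccL{J}{I}L^{-1} \subseteq \ccL{J}{LI}$, the idea is to verify directly that $\ccL{J}{I}L^{-1}\cdot LI \subseteq J$: collapsing $L^{-1}L$ to $\OR{L}$ and then using $\OR{L}I \subseteq \OL{I}I = I$ reduces the left-hand side to a subset of $\ccL{J}{I}\cdot I$, which is contained in $J$ by definition of the colon.

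For the reverse inclusion $\ccL{J}{LI} \subseteq \ccL{J}{I}L^{-1}$, I would start from $\ccL{J}{LI}\cdot L\cdot I \subseteq J$ to deduce $\ccL{J}{LI}\cdot L \subseteq \ccL{J}{I}$, and then multiply on the right by $L^{-1}$ to obtain $\ccL{J}{LI}\cdot LL^{-1} \subseteq \ccL{J}{I}L^{-1}$. Since $LL^{-1} = \OL{L}$ contains $1$, the product $\ccL{J}{LI}\cdot \OL{L}$ already contains $\ccL{J}{LI}$, yielding the desired inclusion.

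I do not expect any serious obstacle. The only subtle point is making sure the factors of $L$ and $L^{-1}$ are composed in the correct order so that the product $L^{-1}L = \OR{L}$ appears, since this is what interacts with $I$ via the hypothesis $\OR{L}\subseteq \OL{I}$; this hypothesis is the natural mirror of the compatibility condition $\OR{L}\subseteq \OL{J}$ appearing in Lemma~\ref{lemma:coloninvnum}, with $I$ playing the role that $J$ had there.
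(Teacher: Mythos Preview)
Your proposal is correct and follows essentially the same argument as the paper: both inclusions are obtained by the same manipulations you describe. The only cosmetic difference is that for the step $\ccL{J}{LI}\subseteq \ccL{J}{LI}\,\OL{L}$ the paper invokes Lemma~\ref{lemma:colonalpha}.\ref{lemma:colonalpha:7} (using $\OL{L}\subseteq\OL{LI}$) to get an equality, whereas your observation that $1\in\OL{L}$ already suffices for the needed inclusion.
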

\begin{proof}
    Observe that
    \[ \ccL{J}{I}L^{-1} LI = \ccL{J}{I}\OR{L}I \subseteq \ccL{J}{I}\OL{I} I = \ccL{J}{I}I \subseteq J, \]
    which tells us that $\ccL{J}{I}L^{-1} \subseteq \ccL{J}{LI}$.

    As $\ccL{J}{LI}LI \subseteq J$, we have $\ccL{J}{LI}L \subseteq \ccL{J}{I}$. Multiplying this with $L^{-1}$ on the right gives the last inclusion in
    \[ \ccL{J}{LI} = \ccL{J}{LI}\OL{L}=\ccL{J}{LI}LL^{-1} \subseteq \ccL{J}{I}L^{-1}. \]
    The first equality follows from Lemma~\ref{lemma:colonalpha}.\ref{lemma:colonalpha:7}, since $\OL{L} \subseteq\OL{LI}$.
\end{proof}

We conclude this section with a lemma on the finiteness of the number of intermediate lattices, which is required for our algorithms to terminate.


\begin{lemma}\label{lem:finmanylatt}
    Consider the following statements.
    \begin{enumerate}[\normalfont(i)]
        \item\label{lem:finmanylatt:1} The Dedekind domain $R$ is residually finite, that is, for every maximal ideal $\p$ of $R$ the quotient ring $R/\p$ is finite.
        \item\label{lem:finmanylatt:2} For every inclusion of $R$-lattices $L\subseteq L'$ the quotient $L'/L$ is finite.
        \item\label{lem:finmanylatt:3} For every inclusion of $R$-lattices $L\subseteq L'$ there are only finitely many $R$-lattices $L_0$ such that $L\subseteq L_0\subseteq L'$.
    \end{enumerate}
    Statement~\ref{lem:finmanylatt:1} is equivalent to~\ref{lem:finmanylatt:2} and either implies~\ref{lem:finmanylatt:3}. If we assume, additionally, that $\dim_F B \geq 2$, then all three statements are equivalent.
\end{lemma}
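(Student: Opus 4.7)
The plan is to establish the chain of implications by a mix of module-theoretic structure results and a direct cardinality argument for the converse under the dimension hypothesis.

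First, for \ref{lem:finmanylatt:1}$\Rightarrow$\ref{lem:finmanylatt:2}, given an inclusion $L \subseteq L'$ of $R$-lattices in $B$, the quotient $L'/L$ is a finitely generated torsion $R$-module (torsion because $L \otimes_R F = B = L' \otimes_R F$). By the structure theorem for finitely generated modules over a Dedekind domain, $L'/L$ is a finite direct sum of cyclic modules $R/\p^n$. Under \ref{lem:finmanylatt:1} each $R/\p^n$ is finite (by dévissage along the filtration by powers of $\p$, each graded piece being a finite-dimensional $R/\p$-vector space), so $L'/L$ is finite. For the converse \ref{lem:finmanylatt:2}$\Rightarrow$\ref{lem:finmanylatt:1}, pick any lattice $L'$ and any maximal ideal $\p$. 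Then $\p L'$ is again a lattice (it still contains an $F$-basis of $B$), so \ref{lem:finmanylatt:2} forces $L'/\p L'$ to be finite. But $L'/\p L'$ is a nonzero $R/\p$-vector space of dimension $\dim_F B \geq 1$, whose cardinality dominates $\# R/\p$; hence $R/\p$ is finite.

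Next, \ref{lem:finmanylatt:2}$\Rightarrow$\ref{lem:finmanylatt:3} is immediate, since intermediate $R$-lattices $L \subseteq L_0 \subseteq L'$ are in bijection with $R$-submodules of the finite module $L'/L$ (any such $L_0$ is finitely generated because $R$ is Noetherian and contains the basis of $B$ that already sits inside $L$).

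The main content is the direction \ref{lem:finmanylatt:3}$\Rightarrow$\ref{lem:finmanylatt:1} under the hypothesis $\dim_F B \geq 2$, which I would prove by contraposition. Suppose $R/\p$ is infinite for some maximal ideal $\p$. Pick any lattice $L'$ and consider the sandwich $\p L' \subseteq L'$; the quotient $V := L'/\p L'$ is an $R/\p$-vector space of dimension $\dim_F B \geq 2$. Over an infinite field, such a vector space admits infinitely many distinct one-dimensional subspaces, and pulling them back to $L'$ produces infinitely many distinct intermediate $R$-lattices between $\p L'$ and $L'$, contradicting \ref{lem:finmanylatt:3}. The role of the assumption $\dim_F B \geq 2$ is precisely to guarantee a supply of proper nonzero subspaces of $V$; if $\dim_F B = 1$ then $V$ has only the two trivial subspaces regardless of the size of $R/\p$, which is exactly why the equivalence can fail in that case.

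The only subtle step is verifying that each module-theoretic construction (the preimages of subspaces of $V$, the submodule $\p L'$, and the intermediate $L_0$ appearing in \ref{lem:finmanylatt:2}$\Rightarrow$\ref{lem:finmanylatt:3}) genuinely yields a lattice in the sense defined at the start of Section~\ref{sec:lattices}; this reduces to checking that each relevant module is finitely generated over the Noetherian ring $R$ and still contains a full $F$-basis of $B$, both of which follow by transport from a fixed basis living in the smaller lattice.
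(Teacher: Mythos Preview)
Your proof is correct and follows essentially the same approach as the paper: both use the structure theorem over a Dedekind domain for \ref{lem:finmanylatt:1}$\Rightarrow$\ref{lem:finmanylatt:2}, the submodule bijection for \ref{lem:finmanylatt:2}$\Rightarrow$\ref{lem:finmanylatt:3}, and the sandwich $\p L' \subseteq L'$ (the paper takes $L'$ to be an order) together with the $R/\p$-vector space $L'/\p L'$ for the contrapositive directions. Your added remarks verifying that the intermediate modules are genuine lattices are a welcome bit of extra care that the paper leaves implicit.
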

\begin{proof}
    Assume~\ref{lem:finmanylatt:1} holds. Let $L\subseteq L'$ be two lattices. Consider the finitely generated $R$-module $M = L'/L$. Observe that $M$ is torsion by~\cite[Lem.~9.3.5.(b)]{JV}, hence there exist nonzero ideals $I_1, \dots, I_n$ such that
    \[
        M \simeq \bigoplus_{i=1}^n R/I_i.
    \]
    Since every $I_i$ can be factored into a product of maximal ideals, every $R/I_i$ is finite, and thus $M = L'/L$ is also finite, which shows~\ref{lem:finmanylatt:2}. As the lattices $L_0$ between $L$ and $L'$ are in bijection with the sub-$R$-modules of $M$, there are finitely many choices for $L_0$, hence~\ref{lem:finmanylatt:3} holds.
    Now, assume that~\ref{lem:finmanylatt:1} does not hold. Let $\p$ be a maximal ideal with infinite residue field $k=R/\p$.
    Pick any order $O$ in $B$ and consider the inclusion of lattices $\p O\subset O$. The quotient $V=O/\p O$ is a non-trivial $k$-vector space. In particular, it is infinite as $k$ is infinite, hence~\ref{lem:finmanylatt:2} does not hold. If we also assume that $\dim_F B \geq 2$, since $\dim_k V = \dim_F B$, $V$ contains infinitely many sub-$k$-vector spaces. As these are in bijection with the intermediate lattices $\p O\subseteq L_0\subseteq O$, the result follows.
\end{proof}

\section{Weak and local equivalence}\label{sec:weak_and_local}

In this section we weaken right equivalence of lattices in two ways; \emph{weak right} and \emph{local right} equivalence. Informally speaking, two lattices are weakly right equivalent if they are equal up to multiplication with an invertible ideal from the left. In the next section we compare these two notions and find conditions under which they are equivalent, and show how they can be used to compute non-invertible right equivalence classes. Recall our notation from the previous section; let $R$ be a Dedekind domain with fraction field $F$, let $B$ be a finite-dimensional $F$-algebra and let $I$ and $J$ be $R$-lattices.

\subsection{Weak equivalence}

\begin{df}\label{def:weakeq}
    We say that lattices $I$ and $J$ are {\bf weakly right equivalent} if there exists an invertible lattice $L$ with $\OR{L} \subseteq \OL{J}$ such that $I = LJ$.
\end{df}
As the name suggests, being weakly right equivalent is an equivalence relation.
This is not obvious and will be proved later in Proposition~\ref{prop:wk_eq_is_eq_rel}.

\begin{thm}\label{thm:wk_eq}
    The following are equivalent:
    \begin{enumerate}[\normalfont(i)]
        \item \label{thm:wk_eq:a}
        $I$ and $J$ are weakly right equivalent.
        \item \label{thm:wk_eq:e}
        There exists an invertible lattice $L'$ with $\OR{L'} \subseteq \OL{I}$ such that $L'I=J$.
        \item \label{thm:wk_eq:b}
        $\ccL{I}{J}$ is invertible with inverse $\ccL{J}{I}$.
    \end{enumerate}
\end{thm}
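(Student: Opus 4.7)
The plan is to show (i)$\Leftrightarrow$(ii) by inversion, then (i)$\Rightarrow$(iii) by explicit computation of the colons $\ccL{I}{J}$ and $\ccL{J}{I}$ in terms of the witness $L$, and finally (iii)$\Rightarrow$(i) by taking $L=\ccL{I}{J}$ and reading the required data off the invertibility hypothesis.

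For (i)$\Leftrightarrow$(ii), if $I=LJ$ with $L$ invertible and $\OR{L}\subseteq\OL{J}$, I would set $L':=L^{-1}$. Using $LL^{-1}=\OL{L}$, $L^{-1}L=\OR{L}$, and compatibility, $L^{-1}$ is again invertible with $\OR{L^{-1}}=\OL{L}\subseteq\OL{LJ}=\OL{I}$, and $L^{-1}I=\OR{L}J=J$ since $\OR{L}\subseteq\OL{J}$ and $1\in\OR{L}$. The reverse implication is entirely symmetric.

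For (i)$\Rightarrow$(iii), I would apply Lemma~\ref{lemma:coloninvnum} with both colon arguments equal to $J$ to obtain $\ccL{I}{J}=\ccL{LJ}{J}=L\OL{J}$, and Lemma~\ref{lemma:coloninvden} to get $\ccL{J}{I}=\ccL{J}{LJ}=\OL{J}L^{-1}$; both applications use $\OR{L}\subseteq\OL{J}$. Multiplying yields $\ccL{J}{I}\ccL{I}{J}=\OL{J}\OR{L}\OL{J}=\OL{J}$ and $\ccL{I}{J}\ccL{J}{I}=L\OL{J}L^{-1}$. To identify these with the expected orders of $\ccL{I}{J}$, I would invoke Lemma~\ref{lemma:orderinvmult} to get $\OR{L\OL{J}}=\OR{\OL{J}}=\OL{J}$, and verify $L\OL{J}L^{-1}=\OL{L\OL{J}}$ by a short double-inclusion argument multiplying by $L$ on the right and using $\OL{J}\OR{L}=\OL{J}$. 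Compatibility of both products drops out since the middle orders all coincide with $\OL{J}$.

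For (iii)$\Rightarrow$(i), I would take $L:=\ccL{I}{J}$, which is invertible by hypothesis. The inclusion $\OR{L}\subseteq\OL{J}$ follows from $\ccL{J}{I}\ccL{I}{J}J\subseteq\ccL{J}{I}I\subseteq J$, so this product lies in $\ccL{J}{J}=\OL{J}$, combined with $\OR{L}=\ccL{J}{I}\ccL{I}{J}$ from invertibility. The inclusion $LJ\subseteq I$ is definitional, while the reverse follows from
\[
    I=\OL{I}I\subseteq\OL{L}I=\ccL{I}{J}\ccL{J}{I}I\subseteq\ccL{I}{J}J,
\]
using $\OL{I}\subseteq\OL{L}$ from Lemma~\ref{lemma:colonalpha}.\ref{lemma:colonalpha:7} and $\OL{L}=\ccL{I}{J}\ccL{J}{I}$ from invertibility. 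The main obstacle is keeping careful track of the various order conditions required as hypotheses of Lemmas~\ref{lemma:coloninvnum}, \ref{lemma:coloninvden}, and~\ref{lemma:orderinvmult} throughout the colon computations; once these are satisfied each step is essentially mechanical.
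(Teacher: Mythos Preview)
Your plan is essentially the paper's, and (i)$\Leftrightarrow$(ii) and (iii)$\Rightarrow$(i) match it almost verbatim. The one substantive difference is in (i)$\Rightarrow$(iii). The paper, having already shown (i)$\Leftrightarrow$(ii), rewrites $J=L^{-1}I$ with $\OR{L^{-1}}\subseteq\OL{I}$ and reapplies Lemmas~\ref{lemma:coloninvnum} and~\ref{lemma:coloninvden} to obtain the second pair of expressions $\ccL{I}{J}=\OL{I}L$ and $\ccL{J}{I}=L^{-1}\OL{I}$. This yields $\ccL{I}{J}\ccL{J}{I}=\OL{I}$ directly and makes the four order identities (hence both compatibilities) drop out symmetrically via Lemma~\ref{lemma:orderinvmult}. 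Your route---computing $\ccL{I}{J}\ccL{J}{I}=L\OL{J}L^{-1}$ and identifying this with $\OL{L\OL{J}}$ by a double-inclusion---also works, but is slightly more hands-on.

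There is, however, a small gap in your compatibility claim. You assert that ``the middle orders all coincide with $\OL{J}$'', but that is only half the story. You have computed $\OR{\ccL{I}{J}}=\OL{J}$, and the same application of Lemma~\ref{lemma:orderinvmult} gives $\OL{\ccL{J}{I}}=\OL{\OL{J}L^{-1}}=\OL{J}$, so the product $\ccL{I}{J}\cdot\ccL{J}{I}$ is indeed compatible. For the other product $\ccL{J}{I}\cdot\ccL{I}{J}$, compatibility means $\OR{\ccL{J}{I}}=\OL{\ccL{I}{J}}$, and you have just argued that $\OL{\ccL{I}{J}}=L\OL{J}L^{-1}$, which is \emph{not} $\OL{J}$ in general. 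You still need to verify $\OR{\OL{J}L^{-1}}=L\OL{J}L^{-1}$; this follows by the same style of double-inclusion you used for the left order (e.g.\ if $\OL{J}L^{-1}x\subseteq\OL{J}L^{-1}$ then $L^{-1}x\subseteq\OL{J}L^{-1}$, whence $x\in LL^{-1}x\subseteq L\OL{J}L^{-1}$), but it does not ``drop out'' from what you wrote. Either add this check, or adopt the paper's symmetry trick via (ii), which sidesteps the issue entirely.
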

\begin{proof}
    We prove that~\ref{thm:wk_eq:a} implies~\ref{thm:wk_eq:e}.
    So, there exists an invertible lattice $L$ with
    $\OR{L}\subseteq \OL{J}$ such that $I = LJ$.
    Observe that by Lemma~\ref{lemma:uniquesidedinv} we have
    \[ LL^{-1} = \OL{L} = \OR{L^{-1}} \text{ and } L^{-1}L=\OR{L}=\OL{L^{-1}}. \]
    Hence multiplying the relation $I=LJ$ with $L^{-1}$ on the left gives us
    \[ L^{-1}I=L^{-1} L J = \OR{L} J = J, \]
    since $\OR{L} \subseteq \OL{J}$.
    Moreover, we see that
    \[ \OR{L^{-1}} = \OL{L} \subseteq \OL{LJ} = \OL{I}. \]
    If we take $L'=L^{-1}$, we then see that we have proven~\ref{thm:wk_eq:e}.
    The converse holds by symmetry.

    We turn to the implication~\ref{thm:wk_eq:a} $\Longrightarrow$~\ref{thm:wk_eq:b}. By Lemma~\ref{lemma:coloninvnum} we get
    \begin{equation}\label{eq:cIJisLOLJ}
        \ccL{I}{J}=\ccL{LJ}{J}=L\OL{J}
    \end{equation}
    and by Lemma~\ref{lemma:coloninvden} we have
    \begin{equation}\label{eq:cJIisOLJL-1}
         \ccL{J}{I}=\ccL{J}{LJ}=\OL{J}L^{-1}.
    \end{equation}
    Therefore
    \[\ccL{J}{I}\ccL{I}{J}=\OL{J}L^{-1} L\OL{J} = \OL{J}\OR{L}\OL{J}=\OL{J}.\]
    By the first part of the proof we also have that $J=L^{-1}I$.
    Since, as we observed before that $\OR{L^{-1}}\subseteq \OL{I}$, we obtain from Lemmas~\ref{lemma:coloninvnum} and~\ref{lemma:coloninvden} that
    \[ \ccL{I}{J} = \ccL{I}{L^{-1}I}=\OL{I}L \]
    and
    \[ \ccL{J}{I} = \ccL{L^{-1}I}{I} = L^{-1}\OL{I}. \]
    It follows that
    \[ \ccL{I}{J}\ccL{J}{I}=\OL{I}LL^{-1}\OL{I}=\OL{I}\OR{L^{-1}}\OL{I}=\OL{I}. \]

    By Equation \eqref{eq:cIJisLOLJ} and Lemma~\ref{lemma:orderinvmult}.\ref{lemma:orderinvmultright} we have that
    \begin{equation}\label{eq:wk_eq:compolJ1}
        \OR{\ccL{I}{J}} = \OR{L\OL{J}} = \OR{\OL{J}}=\OL{J}.
    \end{equation}
    Also, we obtain from Equation \eqref{eq:cJIisOLJL-1} and Lemma~\ref{lemma:orderinvmult}.\ref{lemma:orderinvmultleft} that
    \begin{equation}\label{eq:wk_eq:compolJ2}
        \OL{\ccL{J}{I}} = \OL{ \OL{J} L^{-1} } = \OL{\OL{J}} = \OL{J}.
    \end{equation}
    So the lattices $\ccL{I}{J}$ and $\ccL{J}{I}$ are compatible.
    Similarly, since $\OL{L}=\OR{L^{-1}}\subseteq \OL{I}$, using Lemma~\ref{lemma:orderinvmult}.\ref{lemma:orderinvmultleft}, we see
    \begin{equation}\label{eq:wk_eq:compolI1}
        \OL{\ccL{I}{J}} = \OL{ \OL{I}L }=\OL{I},
    \end{equation}
    and, using Lemma~\ref{lemma:orderinvmult}.\ref{lemma:orderinvmultright}, we obtain
    \begin{equation}\label{eq:wk_eq:compolI2}
        \OR{\ccL{J}{I}} = \OR{L^{-1}\OL{I}} = \OL{I}.
    \end{equation}
    Therefore also $\ccL{J}{I}$ and $\ccL{I}{J}$ are compatible.
    This concludes the proof that~\ref{thm:wk_eq:a} implies~\ref{thm:wk_eq:b}.

    Finally, assume that~\ref{thm:wk_eq:b} holds.
    Since $1\in\ccL{I}{J}\ccL{J}{I}$ we have the following inclusions
    \[ I \subseteq \ccL{I}{J}\ccL{J}{I} I \subseteq \ccL{I}{J} J \subseteq I, \]
    and therefore
    \begin{equation}\label{eq:wk_eq:IeqccLIJJ}
        I = \ccL{I}{J} J.
    \end{equation}
    By assumption, $\ccL{I}{J}$ is invertible with inverse $\ccL{J}{I}$.
    Hence
    \[ \OR{\ccL{I}{J}} = \ccL{J}{I}\ccL{I}{J} \subseteq \OL{J}. \]
    In particular, we obtain that $L=\ccL{I}{J}$ satisfies all the requirements to prove~\ref{thm:wk_eq:a}.
\end{proof}

 In the following proposition we collect some facts that were deduced in the proof of the previous theorem.

\begin{prop}\label{prop:wk_eq_cons}
    Let $I$ and $J$ be weakly right equivalent lattices. Then
    \begin{enumerate}[\normalfont(i)]
        \item \label{prop:wk_eq_cons:a} $\OR{I}=\OR{J}$.
        \item \label{prop:wk_eq_cons:b} $\OL{\ccL{J}{I}}=\OR{\ccL{I}{J}} = \OL{J}$.
        \item \label{prop:wk_eq_cons:c} $\OL{\ccL{I}{J}} = \OR{\ccL{J}{I}} =\OL{I}$.
        \item \label{prop:wk_eq_cons:f} there exists a unique invertible lattice $L$ such that $I=LJ$ and $\OR{L}=\OL{J}$, namely $L=\ccL{I}{J}$.
    \end{enumerate}
\end{prop}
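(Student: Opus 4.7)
The plan is to read off parts~\ref{prop:wk_eq_cons:b} and~\ref{prop:wk_eq_cons:c} directly from equations derived inside the proof of Theorem~\ref{thm:wk_eq}, and then to establish parts~\ref{prop:wk_eq_cons:a} and~\ref{prop:wk_eq_cons:f} with a short additional argument. So I would begin by invoking the characterization~\ref{thm:wk_eq:a} of Theorem~\ref{thm:wk_eq}: fix an invertible lattice $L$ with $\OR{L}\subseteq \OL{J}$ and $I = LJ$.

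For part~\ref{prop:wk_eq_cons:a}, the hypothesis $\OR{L}\subseteq \OL{J}$ lets me apply Lemma~\ref{lemma:orderinvmult}.\ref{lemma:orderinvmultright} (noting $L$ is left invertible) to conclude $\OR{I} = \OR{LJ} = \OR{J}$. For parts~\ref{prop:wk_eq_cons:b} and~\ref{prop:wk_eq_cons:c}, the equalities $\OR{\ccL{I}{J}} = \OL{J}$, $\OL{\ccL{J}{I}} = \OL{J}$, $\OL{\ccL{I}{J}} = \OL{I}$ and $\OR{\ccL{J}{I}} = \OL{I}$ are precisely equations~\eqref{eq:wk_eq:compolJ1}, \eqref{eq:wk_eq:compolJ2}, \eqref{eq:wk_eq:compolI1} and~\eqref{eq:wk_eq:compolI2}, which were proven during the course of Theorem~\ref{thm:wk_eq}; I would simply recall them here.

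For part~\ref{prop:wk_eq_cons:f}, existence is already handled: equation~\eqref{eq:wk_eq:IeqccLIJJ} gives $I = \ccL{I}{J}\, J$, part~\ref{prop:wk_eq_cons:b} gives $\OR{\ccL{I}{J}} = \OL{J}$, and Theorem~\ref{thm:wk_eq}.\ref{thm:wk_eq:b} asserts that $\ccL{I}{J}$ is invertible. The work is in uniqueness: if $L$ is any invertible lattice satisfying $I = LJ$ and $\OR{L} = \OL{J}$, then applying Lemma~\ref{lemma:coloninvnum} with the inclusion $\OR{L}\subseteq \OL{J}$ yields
\[
    \ccL{I}{J} \;=\; \ccL{LJ}{J} \;=\; L\,\OL{J} \;=\; L\,\OR{L} \;=\; L,
\]
where the last equality uses that $\OR{L}$ acts as the identity on $L$ from the right.

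The proof is essentially a bookkeeping exercise, so no step is genuinely hard; the only mild obstacle is to ensure the hypotheses of Lemmas~\ref{lemma:orderinvmult} and~\ref{lemma:coloninvnum} are verified (left vs.\ right invertibility, and the correct inclusion of orders), which follow cleanly from $\OR{L}\subseteq \OL{J}$ and the invertibility of $L$.
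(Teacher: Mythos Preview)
Your proposal is correct and follows essentially the same approach as the paper: part~\ref{prop:wk_eq_cons:a} via Lemma~\ref{lemma:orderinvmult}.\ref{lemma:orderinvmultright}, parts~\ref{prop:wk_eq_cons:b} and~\ref{prop:wk_eq_cons:c} by citing equations~\eqref{eq:wk_eq:compolJ1}--\eqref{eq:wk_eq:compolI2}, and part~\ref{prop:wk_eq_cons:f} from equation~\eqref{eq:wk_eq:IeqccLIJJ} for existence and equation~\eqref{eq:cIJisLOLJ} for uniqueness. Your uniqueness argument spells out the final step $L\OL{J}=L\OR{L}=L$ explicitly, which the paper leaves implicit, but otherwise the two proofs coincide.
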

\begin{proof}
    By assumption we have that $I=LJ$ for an invertible ideal $L$ with $\OR{L}\subseteq \OL{J}$.
    Hence Lemma~\ref{lemma:orderinvmult}.\ref{lemma:orderinvmultright} implies Part~\ref{prop:wk_eq_cons:a}.
    Part~\ref{prop:wk_eq_cons:b} follows from Equations \eqref{eq:wk_eq:compolJ1} and \eqref{eq:wk_eq:compolJ2}, while Part~\ref{prop:wk_eq_cons:c} follows from Equations \eqref{eq:wk_eq:compolI1} and \eqref{eq:wk_eq:compolI2}.
    For Part~\ref{prop:wk_eq_cons:f}, observe that $L=\ccL{I}{J}$ satisfies $I = LJ$ by  Equation \eqref{eq:wk_eq:IeqccLIJJ} and $\OR{L}=\OL{J}$ by Part~\ref{prop:wk_eq_cons:b}.
    Uniqueness follows from Equation \eqref{eq:cIJisLOLJ}.
\end{proof}

\begin{prop}\label{prop:wk_eq_is_eq_rel}
    Weak right equivalence is an equivalence relation.
\end{prop}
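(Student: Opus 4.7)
The plan is to check the three properties of an equivalence relation in turn, leaning heavily on the characterizations established in Theorem~\ref{thm:wk_eq} and the bookkeeping collected in Proposition~\ref{prop:wk_eq_cons}.

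For reflexivity, I would exhibit $L = \OL{I}$ as a witness. This is an order, hence trivially invertible (with $L^{-1} = L$ and compatible products $L \cdot L = L$), it satisfies $\OR{L} = \OL{I} \subseteq \OL{I}$, and clearly $LI = I$.

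For symmetry, there is nothing to do beyond invoking the equivalence of conditions \ref{thm:wk_eq:a} and \ref{thm:wk_eq:e} in Theorem~\ref{thm:wk_eq}: if $I \simR J$ weakly, then there exists an invertible $L'$ with $\OR{L'} \subseteq \OL{I}$ and $L'I = J$, which is precisely the statement that $J$ is weakly right equivalent to $I$.

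Transitivity is the only step with any content. Suppose $I$ is weakly right equivalent to $J$ and $J$ is weakly right equivalent to $K$. The naive idea is to multiply the two witnessing invertible lattices, but to ensure the product is invertible via Lemma~\ref{lemma:compatibleproductinvertible} I need the two factors to be compatible. This is where the canonical choice of representative from Proposition~\ref{prop:wk_eq_cons}.\ref{prop:wk_eq_cons:f} is useful: I would set $L_1 := \ccL{I}{J}$ and $L_2 := \ccL{J}{K}$. Then $L_1$ and $L_2$ are invertible, $I = L_1 J$ and $J = L_2 K$, while Proposition~\ref{prop:wk_eq_cons}.\ref{prop:wk_eq_cons:b} gives $\OR{L_1} = \OL{J}$ and Proposition~\ref{prop:wk_eq_cons}.\ref{prop:wk_eq_cons:c} gives $\OL{L_2} = \OL{J}$, so the factors are compatible. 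Lemma~\ref{lemma:compatibleproductinvertible}.\ref{lemma:compatibleproductinvertible:3} then makes $L := L_1 L_2$ invertible, Lemma~\ref{lemma:orderinvmult}.\ref{lemma:orderinvmultright} yields $\OR{L} = \OR{L_2} = \OL{K}$, and $LK = L_1 L_2 K = L_1 J = I$. So $I$ is weakly right equivalent to $K$.

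The only real obstacle is arranging the compatibility needed to multiply invertible lattices; everything else is formal. Once one notices that the canonical representatives in Proposition~\ref{prop:wk_eq_cons}.\ref{prop:wk_eq_cons:f} are engineered so that their right and left orders match $\OL{J}$, the argument is immediate.
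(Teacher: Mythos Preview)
Your proof is correct and follows essentially the same approach as the paper: reflexivity and symmetry are dispatched immediately, and for transitivity you take the canonical invertible witnesses $\ccL{I}{J}$ and $\ccL{J}{K}$ from Proposition~\ref{prop:wk_eq_cons}.\ref{prop:wk_eq_cons:f}, verify their compatibility via parts~\ref{prop:wk_eq_cons:b} and~\ref{prop:wk_eq_cons:c}, and then invoke Lemma~\ref{lemma:compatibleproductinvertible}.\ref{lemma:compatibleproductinvertible:3} and Lemma~\ref{lemma:orderinvmult}.\ref{lemma:orderinvmultright} exactly as the paper does.
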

\begin{proof}
    Reflexivity is clear from the definition.
    Symmetry is contained in Theorem~\ref{thm:wk_eq}.
    We show the relation is transitive.
    Assume lattices $J_1$ and $J_2$ are weakly right equivalent, and $J_2$ and $J_3$ are weakly right equivalent.
    By Proposition~\ref{prop:wk_eq_cons}.\ref{prop:wk_eq_cons:f}, the colon lattices $\ccL{J_1}{J_2}$ and $\ccL{J_2}{J_3}$ are invertible, $J_1 = \ccL{J_1}{J_2} J_2$ and $J_2 = \ccL{J_2}{J_3} J_3$, and $\OR{\ccL{J_1}{J_2}} = \OL{J_2}$ and $\OR{\ccL{J_2}{J_3}} = \OL{J_3}$.

    Note that $\ccL{J_1}{J_2}$ and $\ccL{J_2}{J_3}$ are compatible, as from Proposition~\ref{prop:wk_eq_cons}.\ref{prop:wk_eq_cons:b} and~\ref{prop:wk_eq_cons:c} it follows that
    \[
        \OR{\ccL{J_1}{J_2}} = \OL{J_2} = \OL{\ccL{J_2}{J_3}}.
    \]
    As both $\ccL{J_1}{J_2}$ and $\ccL{J_2}{J_3}$ are invertible, the same is true for the lattice $\ccL{J_1}{J_2}\ccL{J_2}{J_3}$ by Lemma~\ref{lemma:compatibleproductinvertible}.\ref{lemma:compatibleproductinvertible:3}.

    Furthermore, Lemma~\ref{lemma:orderinvmult}.\ref{lemma:orderinvmultright} shows that
    \[
    \OR{\ccL{J_1}{J_2}\ccL{J_2}{J_3}} = \OR{\ccL{J_2}{J_3}} = \OL{J_3}.
    \]
    In view of the equality
    \[
    \ccL{J_1}{J_2}\ccL{J_2}{J_3} J_3 = \ccL{J_1}{J_2}J_2 = J_1,
    \]
    we conclude that $J_1$ and $J_3$ are weakly right equivalent.
\end{proof}

Invertibility of a lattice can be reformulated in terms of weak right equivalence as follows.
\begin{prop}\label{prop:invOR}
    A lattice $I$ is invertible if and only if $I$ is weakly right equivalent to $\OR{I}$.
\end{prop}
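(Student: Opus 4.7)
The proof plan is to handle the two directions separately, exploiting the machinery built up in Proposition~\ref{prop:wk_eq_cons} and Lemma~\ref{lemma:compatibleproductinvertible}.

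For the forward direction, suppose $I$ is invertible. Observe that any order $O$ satisfies $\OL{O} = O = \OR{O}$ and is trivially invertible with itself as its inverse. So I will take $L = I$ and verify the conditions of Definition~\ref{def:weakeq} directly: $L$ is invertible by hypothesis, $\OR{L} = \OR{I} = \OL{\OR{I}}$ gives $\OR{L} \subseteq \OL{\OR{I}}$, and $I \cdot \OR{I} = I$ (the inclusion $I \cdot \OR{I} \subseteq I$ is the definition of the right order, while the reverse follows from $1 \in \OR{I}$). Thus $I$ is weakly right equivalent to $\OR{I}$.

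For the reverse direction, suppose $I$ is weakly right equivalent to $\OR{I}$. Setting $J = \OR{I}$ in Proposition~\ref{prop:wk_eq_cons}.\ref{prop:wk_eq_cons:f}, the lattice $L := \ccL{I}{\OR{I}}$ is invertible and satisfies $I = L \cdot \OR{I}$ together with $\OR{L} = \OL{\OR{I}} = \OR{I}$. In particular, $L$ and $\OR{I}$ are compatible. Since $\OR{I}$ is trivially invertible, Lemma~\ref{lemma:compatibleproductinvertible}.\ref{lemma:compatibleproductinvertible:3} applies to the compatible product of the invertible lattices $L$ and $\OR{I}$, so $I = L \cdot \OR{I}$ is invertible.

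The only subtlety is remembering that Definition~\ref{def:weakeq} only asks for $\OR{L} \subseteq \OL{J}$, not equality, so in the reverse direction a naive $L$ from the definition need not be compatible with $\OR{I}$. Invoking Proposition~\ref{prop:wk_eq_cons}.\ref{prop:wk_eq_cons:f} to replace $L$ by the canonical choice $\ccL{I}{\OR{I}}$ resolves this, upgrading the inclusion to the equality needed for Lemma~\ref{lemma:compatibleproductinvertible} to apply. I do not expect any other obstacles; both directions are short once the right lemmas from the preceding buildup are cited.
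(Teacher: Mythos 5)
Your proof is correct and takes essentially the same route as the paper: the forward direction via $I = I\OR{I}$ with $L=I$, and the reverse direction via Proposition~\ref{prop:wk_eq_cons}.\ref{prop:wk_eq_cons:f}. The only difference is cosmetic: where you invoke Lemma~\ref{lemma:compatibleproductinvertible}.\ref{lemma:compatibleproductinvertible:3}, the paper finishes in one line by noting that $\OR{L}=\OR{I}$ forces $I = L\OR{I} = L\OR{L} = L$, so $I$ is itself the invertible lattice $L$.
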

\begin{proof}
    Assume that $I$ is invertible. Then the equality $I=I\OR{I}$ shows that $I$ is weakly right equivalent to $\OR{I}$.
    Conversely, assume that $I$ is weakly equivalent to $\OR{I}$.
    Then by Proposition~\ref{prop:wk_eq_cons}.\ref{prop:wk_eq_cons:f}, we have $I=L\OR{I}$ with $L$ invertible and satisfying $\OR{L} = \OL{\OR{I}}=\OR{I}$. So in particular $I = L\OR{I} = L\OR{L} = L$ is invertible.
\end{proof}

In order to determine the right equivalence classes of lattices with prescribed right order, the weak right equivalence classes form a stepping stone. The bridge between the two notions is given by multiplication with invertible lattices, as made precise by the following theorem, where we describe the fibers of the natural surjection
\[ \set{\parbox{4cm}{\centering right equivalence classes with right order $S$}}
 \longrightarrow
\set{\parbox{4cm}{\centering weak right equivalence classes with right order $S$}}. \]

\begin{thm}\label{thm:righteqclasses}
    Let $S$ be an order in $B$.
    Let $\mathcal{J}$ be a set of representatives $J$ of the weak right equivalence classes satisfying $\OR{J}=S$.
    For each $J \in \mathcal{J}$, let $\mathcal{L}_J$ be a set of representatives $L$ of the right equivalence classes of invertible ideals with $\OR{L}=\OL{J}$.
    Let $I$ be a lattice with $\OR{I}=S$.
    Then there is a unique $J \in \mathcal{J}$ and a unique $L \in \mathcal{L}_J$ such that
    \[ I \simR L J. \]
\end{thm}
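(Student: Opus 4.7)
My plan is to split the proof into an existence and a uniqueness part, both of which are powered by Proposition~\ref{prop:wk_eq_cons}.\ref{prop:wk_eq_cons:f} and by the fact that right equivalence refines weak right equivalence.

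For existence, start with the lattice $I$ with $\OR{I}=S$. Since $\mathcal{J}$ is a set of representatives of weak right equivalence classes with right order $S$ (and by Proposition~\ref{prop:wk_eq_cons}.\ref{prop:wk_eq_cons:a} weak right equivalence preserves the right order), there is a unique $J\in\mathcal{J}$ that is weakly right equivalent to $I$. By Proposition~\ref{prop:wk_eq_cons}.\ref{prop:wk_eq_cons:f} the lattice $L_0\colonequals\ccL{I}{J}$ is invertible, satisfies $\OR{L_0}=\OL{J}$, and $I=L_0J$. Because $L_0$ is invertible with $\OR{L_0}=\OL{J}$, the set $\mathcal{L}_J$ provides a unique $L\in\mathcal{L}_J$ with $L_0\simR L$; writing $L_0=\alpha L$ we obtain $I=L_0J=\alpha LJ$, so $I\simR LJ$ as desired.

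For uniqueness, the first step is to observe that any right equivalence $I=\alpha K$ is in particular a weak right equivalence: the lattice $\alpha\OL{K}$ is principal, hence invertible, satisfies $\OR{\alpha\OL{K}}=\OL{K}$ by the definition of the right order, and $(\alpha\OL{K})K=\alpha K=I$. Suppose now $I\simR LJ$ and $I\simR L'J'$ for some $J,J'\in\mathcal{J}$ and $L\in\mathcal{L}_J$, $L'\in\mathcal{L}_{J'}$. The definition of weak right equivalence gives $LJ$ weakly right equivalent to $J$ and $L'J'$ weakly right equivalent to $J'$; combining with the observation above and with transitivity (Proposition~\ref{prop:wk_eq_is_eq_rel}), both $J$ and $J'$ are weakly right equivalent to $I$, so $J=J'$ by uniqueness in $\mathcal{J}$.

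It remains to show that $LJ\simR L'J$ with $L,L'\in\mathcal{L}_J$ forces $L=L'$. Writing $LJ=\alpha L'J$ for some $\alpha\in B^\times$ and taking $\ccL{\,\cdot\,}{J}$ of both sides, Lemma~\ref{lemma:colonalpha}.\ref{lemma:colonalpha:0} yields $\ccL{LJ}{J}=\alpha\ccL{L'J}{J}$. Since $L$ is invertible with $\OR{L}=\OL{J}$, Lemma~\ref{lemma:coloninvnum} gives $\ccL{LJ}{J}=L\ccL{J}{J}=L\,\OL{J}=L\,\OR{L}=L$, and similarly $\ccL{L'J}{J}=L'$. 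Hence $L=\alpha L'$, so $L\simR L'$, and the hypothesis that $\mathcal{L}_J$ contains a unique representative of each right equivalence class forces $L=L'$.

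The step most worth care is the last one: it is the colon-cancellation $\ccL{LJ}{J}=L$ that turns the right equivalence of the products back into a right equivalence of the invertible factors, and it relies crucially on the invertibility of $L$ (through Lemma~\ref{lemma:coloninvnum}) together with the compatibility $\OR{L}=\OL{J}$ guaranteed by membership in $\mathcal{L}_J$; without both ingredients one cannot extract $L$ from $LJ$.
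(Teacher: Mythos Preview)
Your proof is correct and follows essentially the same path as the paper's: both pick the unique $J\in\mathcal{J}$ in the weak right equivalence class of $I$, invoke Proposition~\ref{prop:wk_eq_cons}.\ref{prop:wk_eq_cons:f} to get an invertible $L_0=\ccL{I}{J}$ with $I=L_0J$ and $\OR{L_0}=\OL{J}$, and then replace $L_0$ by its representative in $\mathcal{L}_J$.

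The one difference is in the uniqueness of $L$. The paper's proof is terse here: it records the uniqueness of $L_0$ from Proposition~\ref{prop:wk_eq_cons}.\ref{prop:wk_eq_cons:f} and stops, leaving implicit the step that if $I=\alpha\tilde LJ$ with $\tilde L\in\mathcal{L}_J$ then $\alpha\tilde L$ is again invertible with right order $\OL{J}$, hence equals $L_0$, hence $\tilde L\simR L_0$. You instead cancel $J$ directly via Lemma~\ref{lemma:coloninvnum}, computing $\ccL{LJ}{J}=L\OL{J}=L$. This is a clean alternative that makes the injectivity of $L\mapsto LJ$ explicit rather than deducing it from Proposition~\ref{prop:wk_eq_cons}.\ref{prop:wk_eq_cons:f}; both arguments rely on the same compatibility $\OR{L}=\OL{J}$ and invertibility of $L$.
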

\begin{proof}
    By Proposition~\ref{prop:wk_eq_cons}.\ref{prop:wk_eq_cons:a}, we know that the right order of a lattice is an invariant of its weak right equivalence class.
    Hence, since weak right equivalence is an equivalence relation by Proposition~\ref{prop:wk_eq_is_eq_rel}, there exists a unique $J \in \mathcal{J}$ such that $I$ is weakly right equivalent to $J$.
    By Proposition~\ref{prop:wk_eq_cons}.\ref{prop:wk_eq_cons:f}, there exists a unique invertible lattice $L'$ with $\OR{L'}=\OL{J}$ such that $I=L'J$.
    Let $L \in \mathcal{L}_J$ be the representative such that $L'\simR L$.
    Then by construction we have $I\simR L J$.
\end{proof}

\subsection{Local equivalence}

As the name suggests, we define two lattices to be locally right equivalent if they are right equivalent locally at every maximal ideal of $R$.
\begin{df}\label{def:loceq}
    We say that lattices $I$ and $J$ are {\bf locally right equivalent} if~$I_{\p} \sim_R J_{\p}$ for every maximal ideal $\p$ of $R$.
\end{df}
It is clear from the definition that being locally right equivalent is an equivalence relation. As every locally principal lattice is invertible (see for example~\cite[Cor.~16.5.10]{JV}), one expects local right equivalence to be stronger than weak right equivalence. This is confirmed by the next proposition.

\begin{thm}\label{thm:loc_eq}
    The following are equivalent.
    \begin{enumerate}[\normalfont(i)]
        \item \label{thm:loc_eq:a}
        $I$ and $J$ are locally right equivalent.
        \item \label{thm:loc_eq:b}
        $\ccL{I}{J}$ is locally principal with inverse $\ccL{J}{I}$.
        \item \label{thm:loc_eq:c}
        There exists a locally principal lattice $L$ with $\OR{L} \subseteq \OL{J}$ such that $LJ=I$.
    \end{enumerate}
\end{thm}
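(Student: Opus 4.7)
The plan is to mirror the structure of the proof of Theorem~\ref{thm:wk_eq}, but to work one maximal ideal at a time, exploiting that the colon, product, and left/right-order operations all commute with localization at $\p$, together with the fact from~\cite[Cor.~16.5.10]{JV} that a locally principal lattice is invertible. Throughout, I use that two lattices coincide as soon as their localizations do, so establishing an identity of lattices can be reduced to a principalized computation at each $\p$.

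For \ref{thm:loc_eq:a}$\Rightarrow$\ref{thm:loc_eq:b}, I would fix $\p$ and pick $\alpha_\p \in B^\times$ with $I_\p = \alpha_\p J_\p$. A direct application of Lemma~\ref{lemma:colonalpha}.\ref{lemma:colonalpha:0} and~\ref{lemma:colonalpha:2} gives
\[ \ccL{I}{J}_\p = \alpha_\p \OL{J}_\p, \qquad \ccL{J}{I}_\p = \OL{J}_\p \alpha_\p^{-1}, \]
both visibly principal, so $\ccL{I}{J}$ is locally principal. Multiplying the two expressions confirms $\ccL{I}{J}_\p \ccL{J}{I}_\p = \alpha_\p \OL{J}_\p \alpha_\p^{-1} = \OL{I}_\p$ and $\ccL{J}{I}_\p \ccL{I}{J}_\p = \OL{J}_\p$, and the compatibility conditions required by the definition of invertibility are a short check at the level of left and right orders. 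Gluing locally gives \ref{thm:loc_eq:b}.

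For \ref{thm:loc_eq:b}$\Rightarrow$\ref{thm:loc_eq:c}, locally principal implies invertible, so Theorem~\ref{thm:wk_eq} applies and $I$, $J$ are weakly right equivalent. Proposition~\ref{prop:wk_eq_cons}.\ref{prop:wk_eq_cons:f} pins down the unique invertible $L$ with $\OR{L} = \OL{J}$ and $I = LJ$ as $L = \ccL{I}{J}$, which is locally principal by hypothesis. For \ref{thm:loc_eq:c}$\Rightarrow$\ref{thm:loc_eq:a}, I would localize $LJ = I$ at each $\p$ and write $L_\p = \alpha_\p \OR{L_\p}$ with $\alpha_\p \in B^\times$. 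Since $\OR{L_\p} \subseteq \OL{J_\p}$ and $1 \in \OR{L_\p}$, we have $\OR{L_\p} J_\p = J_\p$, and hence $I_\p = L_\p J_\p = \alpha_\p J_\p$, proving local right equivalence.

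The main subtlety is organizational rather than technical: one must be careful not to conflate \emph{locally principal} (strictly stronger than invertibility in general) with \emph{invertible}, and must invoke the uniqueness statement of Proposition~\ref{prop:wk_eq_cons}.\ref{prop:wk_eq_cons:f} at the step \ref{thm:loc_eq:b}$\Rightarrow$\ref{thm:loc_eq:c} so that the invertible lattice produced by Theorem~\ref{thm:wk_eq} is forced to be the locally principal $\ccL{I}{J}$ itself, rather than some other invertible lattice in the same weak right equivalence class. Everything else reduces to routine bookkeeping once one observes that localization commutes with all operations on lattices appearing in the statement.
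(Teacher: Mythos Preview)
Your proposal is correct and follows essentially the same approach as the paper's proof: the implication \ref{thm:loc_eq:a}$\Rightarrow$\ref{thm:loc_eq:b} is done by localizing and computing $\ccL{I}{J}_\p=\alpha_\p\OL{J}_\p$ via Lemma~\ref{lemma:colonalpha}, the implication \ref{thm:loc_eq:b}$\Rightarrow$\ref{thm:loc_eq:c} goes through Theorem~\ref{thm:wk_eq} and Proposition~\ref{prop:wk_eq_cons}.\ref{prop:wk_eq_cons:f} to identify $L=\ccL{I}{J}$, and \ref{thm:loc_eq:c}$\Rightarrow$\ref{thm:loc_eq:a} is the same local computation using $L_\p=\alpha_\p\OR{L_\p}$ and $\OR{L_\p}\subseteq\OL{J_\p}$. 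Your write-up is in fact slightly more explicit about the inverse-product and compatibility checks in \ref{thm:loc_eq:a}$\Rightarrow$\ref{thm:loc_eq:b}, but the underlying argument is identical.
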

\begin{proof}
    Let $\p$ be a maximal ideal of $R$. By assumption there is an $\alpha \in B^\times$ such that $I_\p=\alpha J_\p$.
    Then by Lemma~\ref{lemma:colonalpha}.\ref{lemma:colonalpha:0} we have
    \[ \big(\ccL{I}{J}\big)_\p = \ccL{I_\p}{J_\p} = \ccL{\alpha J_\p}{J_\p} = \alpha \OL{J_\p}.\]
    It follows that
    \[ \OR{\big(\ccL{I}{J}\big)_\p } = \OL{J_\p}.\]
    Combined, we deduce that $\big(\ccL{I}{J}\big)_\p$ is principal. Moreover, by symmetry we $\big(\ccL{J}{I}\big)_\p = \OL{J_\p} \alpha^{-1}$.
    We see that $\ccL{I}{J}$ is locally principal, and $\ccL{I}{J}$ and $\ccL{J}{I}$ are locally compatible inverses of each other, hence also globally.

  Assume now that~\ref{thm:loc_eq:b} holds. It follows from Theorem~\ref{thm:wk_eq} that $I$ and $J$ are weakly right equivalent, hence by Proposition~\ref{prop:wk_eq_cons}.\ref{prop:wk_eq_cons:f} the lattice $L=\ccL{I}{J}$ satisfies the requirements of~\ref{thm:loc_eq:c}.

  Finally, assume that~\ref{thm:loc_eq:c} holds. For any maximal ideal $\p$ of $R$ we have $\OR{L_\p} \subseteq \OL{J_\p}$, and there is an $\alpha \in B^\times$ such that $L_\p = \alpha \OR{L_\p}$. Therefore
  \[
  I_\p = L_\p J_\p = \alpha \OR{L_\p} J_{\p} = \alpha J_{\p},
  \]
  which gives~\ref{thm:loc_eq:a}.
\end{proof}

\begin{cor}\label{cor:locrx_wkrx}
Let $I$ and $J$ be locally right equivalent $R$-lattices. Then $I$ and $J$ are weakly right equivalent.
\end{cor}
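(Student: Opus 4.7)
The plan is to derive the corollary directly from the characterization of local right equivalence in Theorem~\ref{thm:loc_eq} together with the already-noted fact (cited from~\cite[Cor.~16.5.10]{JV}) that every locally principal lattice is invertible.

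First, since $I$ and $J$ are locally right equivalent by hypothesis, I would apply the equivalence~\ref{thm:loc_eq:a} $\Longleftrightarrow$~\ref{thm:loc_eq:c} of Theorem~\ref{thm:loc_eq} to obtain a locally principal $R$-lattice $L$ satisfying $\OR{L}\subseteq\OL{J}$ and $LJ=I$. Then I would invoke the fact that local principality implies invertibility to promote $L$ to an invertible lattice. At that point, the data $(L,J)$ fits Definition~\ref{def:weakeq} exactly, so $I$ and $J$ are weakly right equivalent.

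There is essentially no obstacle here: the content is already contained in Theorem~\ref{thm:loc_eq} and the remark preceding it. In fact, the same conclusion is implicitly obtained inside the proof of Theorem~\ref{thm:loc_eq} via the route~\ref{thm:loc_eq:b} $\Rightarrow$ (weakly right equivalent via Theorem~\ref{thm:wk_eq}) $\Rightarrow$~\ref{thm:loc_eq:c}, so the corollary simply makes this consequence explicit. The proof should be a two-line deduction chaining Theorem~\ref{thm:loc_eq}.\ref{thm:loc_eq:c}, the implication ``locally principal $\Rightarrow$ invertible'', and Definition~\ref{def:weakeq}.
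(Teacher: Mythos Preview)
Your proposal is correct and essentially identical to the paper's own proof: the paper simply says that every locally principal lattice is invertible and then invokes Theorem~\ref{thm:loc_eq} together with Theorem~\ref{thm:wk_eq}, which is exactly the chain you describe (your use of Definition~\ref{def:weakeq} in place of Theorem~\ref{thm:wk_eq} is immaterial, since condition~\ref{thm:loc_eq:c} already matches the definition verbatim).
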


\begin{proof}
Every locally principal lattice is invertible, hence this follows from Theorem~\ref{thm:loc_eq} and Theorem~\ref{thm:wk_eq}.
\end{proof}

As in the case of weak right equivalence, we enumerate some properties of locally right equivalent lattices.

\begin{prop}\label{prop:loc_eq_cons}
    For locally right equivalent lattices $I$ and $J$ the following hold.
    \begin{enumerate}[\normalfont(i)]
        \item \label{prop:loc_eq_cons:a} $\OR{I}=\OR{J}$.
        \item \label{prop:loc_eq_cons:b} $\OL{\ccL{J}{I}}=\OR{\ccL{I}{J}} = \OL{J}$.
        \item \label{prop:loc_eq_cons:c} $\OL{\ccL{I}{J}} = \OR{\ccL{J}{I}} =\OL{I}$.
        \item \label{prop:loc_eq_cons:f} There exists a unique locally principal lattice $L$ such that $I=LJ$ and $\OR{L}=\OL{J}$, namely $L=\ccL{I}{J}$.
    \end{enumerate}
\end{prop}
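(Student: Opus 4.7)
The plan is to reduce this proposition to its weak-equivalence counterpart, Proposition~\ref{prop:wk_eq_cons}, which is already available. The key observation is that Corollary~\ref{cor:locrx_wkrx} tells us that any two locally right equivalent lattices $I$ and $J$ are automatically weakly right equivalent. Once we have this, all four statements are either immediate translations of the corresponding parts of Proposition~\ref{prop:wk_eq_cons} or only require a small extra local-principality observation coming from Theorem~\ref{thm:loc_eq}.

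In more detail, I would proceed as follows. First, invoke Corollary~\ref{cor:locrx_wkrx} to obtain that $I$ and $J$ are weakly right equivalent. Then parts~\ref{prop:loc_eq_cons:a}, \ref{prop:loc_eq_cons:b}, and~\ref{prop:loc_eq_cons:c} follow word for word from Proposition~\ref{prop:wk_eq_cons}.\ref{prop:wk_eq_cons:a}, \ref{prop:wk_eq_cons:b}, \ref{prop:wk_eq_cons:c} respectively, without any additional work. For part~\ref{prop:loc_eq_cons:f}, existence is provided by Proposition~\ref{prop:wk_eq_cons}.\ref{prop:wk_eq_cons:f} applied to the weak right equivalence: it gives an invertible lattice $L = \ccL{I}{J}$ with $I = LJ$ and $\OR{L} = \OL{J}$. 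The only additional ingredient is that this particular $L$ is in fact locally principal, which is precisely the content of Theorem~\ref{thm:loc_eq}.\ref{thm:loc_eq:b}.

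For uniqueness in part~\ref{prop:loc_eq_cons:f}, suppose $L'$ is any locally principal lattice with $I = L'J$ and $\OR{L'} = \OL{J}$. Since every locally principal lattice is invertible (as used in Corollary~\ref{cor:locrx_wkrx}), $L'$ is invertible, so the uniqueness clause of Proposition~\ref{prop:wk_eq_cons}.\ref{prop:wk_eq_cons:f} forces $L' = \ccL{I}{J}$. There is no real obstacle in the argument; the whole proof is essentially bookkeeping, with the only genuine input being the implication \emph{locally right equivalent} $\Rightarrow$ \emph{weakly right equivalent} together with the fact that the distinguished witness $\ccL{I}{J}$ is not merely invertible but locally principal, both of which have already been established.
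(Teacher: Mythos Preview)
The proposal is correct and follows essentially the same approach as the paper: reduce to Proposition~\ref{prop:wk_eq_cons} via Corollary~\ref{cor:locrx_wkrx}, and for part~\ref{prop:loc_eq_cons:f} supplement with the local principality of $\ccL{I}{J}$ coming from Theorem~\ref{thm:loc_eq}. Your treatment of uniqueness is slightly more explicit than the paper's, but the argument is the same.
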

\begin{proof}
    Parts~\ref{prop:loc_eq_cons:a},~\ref{prop:loc_eq_cons:b}, and~\ref{prop:loc_eq_cons:c} follow from Corollary~\ref{cor:locrx_wkrx} and Proposition~\ref{prop:wk_eq_cons}.
    Finally, Part~\ref{prop:loc_eq_cons:f} follows from Proposition~\ref{prop:wk_eq_cons}.\ref{prop:wk_eq_cons:f}, noting that by Corollary~\ref{cor:locrx_wkrx} and Theorem~\ref{thm:loc_eq}, $\ccL{I}{J}$ is locally principal.
\end{proof}

\begin{prop}\label{prop:loc_pr_OR}
    Let $I$ be an $R$-lattice.
    Then $I$ is locally principal if and only if $I$ is locally right equivalent to $\OR{I}$.
\end{prop}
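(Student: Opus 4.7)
The plan is to mirror the proof of Proposition~\ref{prop:invOR} exactly, replacing ``invertible'' with ``locally principal'' and using Proposition~\ref{prop:loc_eq_cons} instead of Proposition~\ref{prop:wk_eq_cons}. The statement is a very close analog, so I expect no real obstacles; the main point is just to unpack the definitions carefully.

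For the forward direction, suppose $I$ is locally principal. Then for every maximal ideal $\p$ of $R$, there exists $\alpha_\p \in B^\times$ with $I_\p = \alpha_\p \OR{I_\p}$. Since localization commutes with taking the right order, $\OR{I_\p} = (\OR{I})_\p$, so $I_\p = \alpha_\p (\OR{I})_\p$. This gives $I_\p \simR (\OR{I})_\p$ at every maximal ideal $\p$, which is precisely the definition of local right equivalence between $I$ and $\OR{I}$.

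For the converse, assume $I$ is locally right equivalent to $\OR{I}$. By Proposition~\ref{prop:loc_eq_cons}.\ref{prop:loc_eq_cons:f} applied with $J = \OR{I}$, there exists a (unique) locally principal lattice $L$ such that $I = L \OR{I}$ and $\OR{L} = \OL{\OR{I}} = \OR{I}$. Since $\OR{L} = \OR{I}$, we have $L = L \OR{L} = L \OR{I} = I$, so $I$ itself is locally principal.

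Combining the two directions yields the biconditional. The only thing one has to be a bit careful about is the identity $\OR{I_\p} = (\OR{I})_\p$ in the first half (which is standard, as colon ideals commute with localization, noted right before Lemma~\ref{lem:pullback_lattices}) and the manipulation $L = L\OR{L}$ in the second half, which uses only that $\OR{L}$ acts trivially on $L$ from the right.
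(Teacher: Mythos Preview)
Your proof is correct. The forward direction is essentially identical to the paper's argument. For the converse, however, the paper takes a much shorter route: it simply observes that ``$I$ is locally right equivalent to $\OR{I}$'' unpacks, at each maximal ideal $\p$, to $I_\p = \alpha_\p\, \OR{I}_\p = \alpha_\p\, \OR{I_\p}$, which is precisely the statement that $I_\p$ is principal. In other words, once one uses that localization commutes with $\OR{-}$, both sides of the biconditional are \emph{literally the same condition} prime by prime, so no structural input like Proposition~\ref{prop:loc_eq_cons}.\ref{prop:loc_eq_cons:f} is needed. Your approach via Proposition~\ref{prop:loc_eq_cons}.\ref{prop:loc_eq_cons:f} is a faithful mirror of the proof of Proposition~\ref{prop:invOR} and works fine (there is no circularity, since Proposition~\ref{prop:loc_eq_cons} does not rely on the present proposition), but it is heavier than necessary here; the reason the paper can be more direct than in Proposition~\ref{prop:invOR} is that ``locally principal'' is itself a local condition, whereas ``invertible'' is not.
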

\begin{proof}
    Let $\p$ be a maximal ideal of $R$. Then $I_\p=\alpha \OR{I_\p} = \alpha \OR{I}_\p$ is equivalent to $I_\p \simR  \OR{I}_\p$.
\end{proof}

In the following theorem we describe the fibers of the natural surjection (cf.~Theorem~\ref{thm:righteqclasses}).
\[ \set{\parbox{4cm}{\centering right equivalence classes with right order $S$}}
 \longrightarrow
\set{\parbox{4cm}{\centering local right equivalence classes with right order $S$}}. \]
\begin{thm}\label{thm:rightloceqclasses}
    Let $S$ be an order in $B$.
    Let $\mathcal{J}$ be a set of representatives $J$ of the locally right equivalence classes satisfying $\OR{J}=S$.
    For each $J \in \mathcal{J}$ let $\mathcal{L}_J$ be a set of representatives $L$ of the right equivalence classes of locally principal ideals with $\OR{L}=\OL{J}$.
    Let $I$ be a lattice with $\OR{I}=S$.
    Then there is a unique $J \in \mathcal{J}$ and a unique $L \in \mathcal{L}_J$ such that
    \[ I \simR L J. \]
\end{thm}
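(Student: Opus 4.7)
The plan is to mirror the proof of Theorem~\ref{thm:righteqclasses} almost verbatim, replacing ``weak right equivalence'' by ``local right equivalence'' and ``invertible'' by ``locally principal'' throughout. Every ingredient used there has a local counterpart already in place in this section: Proposition~\ref{prop:loc_eq_cons}.\ref{prop:loc_eq_cons:a} plays the role of Proposition~\ref{prop:wk_eq_cons}.\ref{prop:wk_eq_cons:a}, local right equivalence is tautologically an equivalence relation from its pointwise definition (or via Corollary~\ref{cor:locrx_wkrx} and Proposition~\ref{prop:wk_eq_is_eq_rel}), and Proposition~\ref{prop:loc_eq_cons}.\ref{prop:loc_eq_cons:f} plays the role of Proposition~\ref{prop:wk_eq_cons}.\ref{prop:wk_eq_cons:f}. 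I do not expect any genuine obstacle.

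Concretely, I would first extract $J$. By Proposition~\ref{prop:loc_eq_cons}.\ref{prop:loc_eq_cons:a}, the right order is constant on each local right equivalence class, so exactly one $J \in \mathcal{J}$ is locally right equivalent to $I$. Then Proposition~\ref{prop:loc_eq_cons}.\ref{prop:loc_eq_cons:f} yields a unique locally principal lattice $L' = \ccL{I}{J}$ with $\OR{L'} = \OL{J}$ and $I = L' J$. Choosing the representative $L \in \mathcal{L}_J$ with $L \simR L'$ gives $I \simR LJ$.

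For the uniqueness, suppose another pair $(J'', L'')$ with $J'' \in \mathcal{J}$ and $L'' \in \mathcal{L}_{J''}$ satisfies $I \simR L'' J''$. Right equivalence implies local right equivalence, and since $L''$ is locally principal with $\OR{L''} = \OL{J''}$, the lattice $L'' J''$ is locally right equivalent to $J''$ by Theorem~\ref{thm:loc_eq}.\ref{thm:loc_eq:c}. Therefore $I$ is locally right equivalent to $J''$, and so $J'' = J$ by the uniqueness of representatives in $\mathcal{J}$. Writing $I = \alpha L'' J$ for some $\alpha \in B^\times$, and noting that $\alpha L''$ is again locally principal with $\OR{\alpha L''} = \OR{L''} = \OL{J}$, the uniqueness clause of Proposition~\ref{prop:loc_eq_cons}.\ref{prop:loc_eq_cons:f} applied to $I$ and $J$ forces $\alpha L'' = L'$. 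Hence $L'' \simR L'$, and $L'' = L$ by the choice of $\mathcal{L}_J$.
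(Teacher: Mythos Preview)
Your proposal is correct and follows essentially the same approach as the paper's own proof, which invokes Proposition~\ref{prop:loc_eq_cons}.\ref{prop:loc_eq_cons:a} and Proposition~\ref{prop:loc_eq_cons}.\ref{prop:loc_eq_cons:f} in the same order to produce $J$ and then $L$. You additionally spell out the uniqueness of the pair $(J,L)$ explicitly, which the paper leaves implicit; this is a welcome elaboration but not a different method.
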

\begin{proof}
    By Proposition~\ref{prop:loc_eq_cons}.\ref{prop:loc_eq_cons:a}, we know that the right order of a lattice is an invariant of its weak right equivalence class.
    Hence there exists a unique $J \in \mathcal{J}$ such that $I$ is locally right equivalent to $J$.
    By Proposition~\ref{prop:loc_eq_cons}.\ref{prop:loc_eq_cons:f}, there exists a unique locally principal lattice $L'$ with $\OR{L'}=\OL{J}$ such that $I=L'J$.
    Let $L \in \mathcal{L}_J$ be the representative such that $L'\simR L$.
    Then by construction we have $I \simR L J$.
\end{proof}

\section{Comparison between weak and local equivalence}\label{sec:comparison}
As usual, let $R$ be a Dedekind domain with fraction field $F$, and let $B$ be a finite-dimensional $F$-algebra. In the previous section we have introduced two notions of equivalence: local right equivalence and weak right equivalence. In Corollary~\ref{cor:locrx_wkrx} we showed that locally right equivalent lattices are also weakly right equivalent, but the converse is not true in general, as is shown by the next example.

\begin{example}[{\cite[p.~221]{Kap69}}]\label{ex:kap}
    Let $R$ be a discrete valuation ring with field of fractions $F$ and uniformizer $\pi$.
    Consider the lattices in $\cM_3(F)$ given by
    \[ I=
       \begin{pmatrix}
       \pi R &\pi R & R\\    \pi R &\pi R & R\\    R &R & R
       \end{pmatrix},
       \quad
       J=
       \begin{pmatrix}
       R & R & R\\ R & R & R\\    R & R & \pi R
       \end{pmatrix}.
    \]
    One computes that
    \[
        \OR{I}=\OL{J}=
        \begin{pmatrix}
        R & R & R\\    R & R & R\\    \pi R & \pi R & R
        \end{pmatrix},
        \quad
        \OL{I}=\OR{J}=
        \begin{pmatrix}
        R & R & \pi R\\    R & R & \pi R\\    R & R & R
        \end{pmatrix}.
    \]
    Hence $I$ is compatible with $J$ and $J$ is compatible with $I$. Furthermore
    \[ IJ=\OL{I},\quad JI=\OR{I},\]
    proving that $I$ is (two-sided) invertible with (two-sided) inverse $J = I^{-1}$. This means that $I$ is weakly right equivalent to $\OR{I}$ by Proposition~\ref{prop:invOR}.
    On the other hand it is clear that $I$ is not (locally) principal, thus Proposition~\ref{prop:loc_pr_OR} states that $I$ is not locally right equivalent to $\OR{I}$.
\end{example}

In Definitions~\ref{df:condi} and~\ref{df:condii} we introduce and discuss a pair of conditions on the algebra and its orders that guarantee that weak right equivalence coincides with local right equivalence. Noticably, commutative and quaternion algebras satisfy these conditions,
which is proven in Proposition~\ref{prop:invC1C2}.
\begin{df}\label{df:condi}
    We say that an inclusion of two orders $O \subseteq O'$ in $B$ satisfies~$\condi$ if
    for every invertible $R$-lattice $J$ satisfying $\OR{J} = O'$ there exists an invertible~$R$-lattice $I$ with $\OR{I} = O$ such that $IO' = J$. Additionally, the algebra~$B$ satisfies~$\condi$ if any inclusion of orders in~$B$ satisfies~$\condi$.
\end{df}
$\condi$ stands for "Surjective Extension Map".
We give two equivalent conditions that guarantee that certain inclusions of orders satisfy $\condi$.

\begin{prop}\label{prop:loc_eq_same_wek_eq}
    For an order $O'$ in $B$, the following are equivalent.
    \begin{enumerate}[\normalfont(i)]
        \item \label{prop:loc_eq_same_wek_eq:invlocprinc} A lattice $I$ with $\OR{I}=O'$ is invertible if and only if it is locally principal.
        \item \label{prop:loc_eq_same_wek_eq:locrx_wkrx} Two lattices $I$ and $J$ with $\OR{I}=O'$ in $B$ are locally right equivalent if and only if they are weakly right equivalent.
    \end{enumerate}
    Moreover, if any of the two statements is satisfied for $O'$ then any inclusion of orders $O \subseteq O'$ satisfies $\condi$.
\end{prop}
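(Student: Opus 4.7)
The statement breaks into three claims: (ii)$\Rightarrow$(i), (i)$\Rightarrow$(ii), and the ``moreover'' assertion. My plan is to dispatch the first and third cleanly from results earlier in the section, and to concentrate effort on (i)$\Rightarrow$(ii).

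For (ii)$\Rightarrow$(i), I specialize (ii) to the case $J = O'$: by Proposition~\ref{prop:invOR} the lattice $I$ with $\OR{I} = O'$ is weakly right equivalent to $O'$ if and only if $I$ is invertible, and by Proposition~\ref{prop:loc_pr_OR} it is locally right equivalent to $O'$ if and only if $I$ is locally principal. With this choice of $J$, (ii) becomes precisely (i). For the ``moreover'' clause, assume (i) and let $O \subseteq O'$; given any invertible lattice $J$ with $\OR{J} = O'$, hypothesis (i) forces $J$ to be locally principal, and then Lemma~\ref{lem:pullback_lattices} produces a locally principal (hence invertible) lattice $I$ with $\OR{I} = O$ and $IO' = J$, which is exactly (SEM).

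For (i)$\Rightarrow$(ii), Corollary~\ref{cor:locrx_wkrx} handles the direction local$\Rightarrow$weak. For the converse, let $I, J$ be weakly right equivalent with $\OR{I} = O'$; then Proposition~\ref{prop:wk_eq_cons}.\ref{prop:wk_eq_cons:a} gives $\OR{J} = O'$, and Theorem~\ref{thm:wk_eq} produces the invertible colon lattice $L := \ccL{I}{J}$ with $LJ = I$ and $\OR{L} = \OL{J}$. By Theorem~\ref{thm:loc_eq}, showing local right equivalence of $I$ and $J$ reduces to showing that $L$ is locally principal. The main obstacle is that $\OR{L} = \OL{J}$ need not equal $O'$, so (i) cannot be invoked on $L$ directly. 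My plan to overcome this is to work prime-by-prime: at each maximal ideal $\p$ of $R$, (i) is equivalent to the local statement that every invertible $R_\p$-lattice with right order $O'_\p$ is principal, and I would then use the $\OL{J_\p}$-$O'_\p$-bimodule structure on $J_\p$, together with the equation $L_\p J_\p = I_\p$, to transfer this principality property from $O'_\p$ to $\OL{J_\p}$ and conclude that $L_\p$ is principal as required.
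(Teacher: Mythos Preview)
Your treatment of (ii)$\Rightarrow$(i) and of the ``moreover'' clause is correct and matches the paper's: the first specializes (ii) to the pair $(I,O')$ and invokes Propositions~\ref{prop:invOR} and~\ref{prop:loc_pr_OR}, and the second applies Lemma~\ref{lem:pullback_lattices} after (i) has been used to see that the given $J$ is locally principal.

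The gap is in (i)$\Rightarrow$(ii). You correctly pinpoint the obstruction: the invertible lattice $L=\ccL{I}{J}$ furnished by Theorem~\ref{thm:wk_eq} (equivalently Proposition~\ref{prop:wk_eq_cons}.\ref{prop:wk_eq_cons:f}) has right order $\OR{L}=\OL{J}$, which need not equal $O'$, so (i) cannot be applied to $L$. But your proposed workaround --- localize at $\p$ and ``transfer'' the property that every invertible right $O'_\p$-lattice is principal from $O'_\p$ to $\OL{J_\p}$ using the bimodule $J_\p$ --- is a hope, not an argument. Nothing is assumed about $J_\p$ beyond $\OR{J_\p}=O'_\p$: it need not be invertible or projective, so $(-)\cdot J_\p$ gives no Morita-type equivalence between right $\OL{J_\p}$-ideals and right $O'_\p$-ideals, and the bare equation $L_\p J_\p=I_\p$ does not let you read off a single generator for $L_\p$. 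To close the gap you would need either an explicit construction of such a generator or a genuine reduction that replaces $L$ by an invertible lattice whose right order \emph{is} $O'$; as written, neither is supplied.

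For comparison, the paper dispatches (i)$\Rightarrow$(ii) in one sentence, citing only Propositions~\ref{prop:wk_eq_cons}.\ref{prop:wk_eq_cons:f} and~\ref{prop:loc_eq_cons}.\ref{prop:loc_eq_cons:f}. Those propositions identify $L=\ccL{I}{J}$ and record that it is invertible (resp.\ locally principal) exactly when $I,J$ are weakly (resp.\ locally) right equivalent, so the implication again hinges on upgrading ``$L$ invertible'' to ``$L$ locally principal'' --- precisely the step you flagged. The paper does not spell out how hypothesis (i), which constrains only lattices with right order $O'$, bears on $L$, whose right order is $\OL{J}$; so your more careful analysis has in fact isolated a point that the paper's terse citation leaves unaddressed.
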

\begin{proof}
    Recall that locally principal lattices are invertible (\cite[Cor.~16.5.10]{JV}).
    Assume that~\ref{prop:loc_eq_same_wek_eq:locrx_wkrx} holds.
    Let $I$ be an invertible ideal with $\OR{I} = O'$.
    Then $I$ is weakly right equivalent to $\OR{I}=O'$ by Proposition~\ref{prop:invOR}.
    Hence $I$ is locally right equivalent to $\OR{I}=O'$.
    By Proposition~\ref{prop:loc_pr_OR}, it follows that $I$ is locally principal.
    The converse follows from Propositions~\ref{prop:wk_eq_cons}.\ref{prop:wk_eq_cons:f} and~\ref{prop:loc_eq_cons}.\ref{prop:loc_eq_cons:f}.

    Assume that~\ref{prop:loc_eq_same_wek_eq:invlocprinc} or~\ref{prop:loc_eq_same_wek_eq:locrx_wkrx} is true for an order $O'$.
    The fact that $\condi$ holds for any inclusion $O \subseteq O'$ follows from Lemma~\ref{lem:pullback_lattices}.
\end{proof}

Note that Example~\ref{ex:kap} exhibits an algebra which does not satisfy Conditions~\ref{prop:loc_eq_same_wek_eq:invlocprinc} and~\ref{prop:loc_eq_same_wek_eq:locrx_wkrx} from the previous proposition.
We don't know if this algebra satisfies condition $\condi$ or not.
In fact, we do not know of an example of an algebra that does not admit~$\condi$.

The next corollary tells us that if $B$ is separable (see the beginning of Section~\ref{sec:duality} for the definition) any inclusion of order $O\subseteq O'$ in $B$ with $O'$ maximal satisfies~$\condi$.
\begin{cor}\label{cor:maxorder}
    Let $B$ be a separable algebra.
    Let $O$ be an order in $B$ and $O'$ a maximal order containing $O$.
    Then $O \subseteq O'$ satisfies $\condi$.
\end{cor}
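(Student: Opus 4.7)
The plan is to invoke Proposition~\ref{prop:loc_eq_same_wek_eq}: to show that $O \subseteq O'$ satisfies $\condi$ it is enough to verify statement~\ref{prop:loc_eq_same_wek_eq:invlocprinc} for $O'$, namely that a lattice $I$ with $\OR{I}=O'$ is invertible if and only if it is locally principal. One direction, local principality implies invertibility, is the cited fact \cite[Cor.~16.5.10]{JV} and holds for any order. So the content is to prove that, when $O'$ is a maximal order in a separable algebra~$B$, every invertible lattice $I$ with $\OR{I}=O'$ is locally principal.

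First I would reduce to the local case. Local principality is defined prime-by-prime, and invertibility localizes as well: if $I$ is invertible with inverse $I^{-1}$ (which commutes with localization), then $I_\p$ is invertible over $O'_\p$ for every maximal ideal $\p$ of $R$. Moreover, $O'_\p$ is still a maximal $R_\p$-order in $B$, because the maximality property localizes. Hence it suffices to establish the following statement: if $R$ is a DVR, $B$ is a separable algebra over $F=\operatorname{Frac}(R)$, and $O'$ is a maximal $R$-order in $B$, then every lattice $I$ with $\OR{I}=O'$ is principal.

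The main step is then the local statement, which I would deduce from the classical structure theory of maximal orders over a DVR (as developed, for instance, in \cite[Ch.~18]{JV}). Over a DVR, a maximal order $O'$ in a separable algebra is hereditary, and in fact is a principal ideal ring: all one-sided $O'$-ideals are principal. Consequently the invertible lattice $I$ is principal over $R_\p$, which is what we needed. Piecing this together across all $\p$, $I$ is locally principal, as required.

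The main obstacle is of a purely bibliographical nature: identifying the precise reference in \cite{JV} (or in Reiner's \emph{Maximal Orders}) that guarantees that maximal orders over a DVR in a separable algebra are principal ideal rings. Once that is in place, the rest is a direct application of Proposition~\ref{prop:loc_eq_same_wek_eq}, and no further computation is required.
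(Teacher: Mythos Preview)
Your proposal is correct and follows essentially the same approach as the paper: verify condition~\ref{prop:loc_eq_same_wek_eq:invlocprinc} of Proposition~\ref{prop:loc_eq_same_wek_eq} using the classical fact that every one-sided ideal of a maximal order in a separable algebra is locally principal, then conclude. The reference you were looking for is \cite[Thm.~18.10]{Rei03}, which the paper cites directly (without first passing to the local case, since the result there is already stated globally).
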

\begin{proof}
    By~\cite[Thm.~18.10]{Rei03} we know that every one-sided $O'$-ideal is locally principal.
    Therefore we conclude by Proposition~\ref{prop:loc_eq_same_wek_eq}.
\end{proof}

We say that an $R$-lattice $I$ is {\bf left projective} if $I^{-1}I=\OR{I}$ (i.e.~projective as a left $\OL{I}$-module), {\bf right projective} if $II^{-1}=\OL{I}$ (i.e.~projective as a right $\OR{I}$-module), and {\bf projective} if it is left and right projective. See also~\cite[Thm.~20.3.3.(a)]{JV}.

\begin{lemma}\label{lemma:sidedprojinv}
    Let $I$ be a lattice.
    If $I$ right (resp. left) invertible then $I$ is right (resp. left) projective.
\end{lemma}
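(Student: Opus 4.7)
The plan is essentially to invoke Lemma~\ref{lemma:uniquesidedinv}, which does almost all the work. Suppose $I$ is right invertible, so by definition there exists a lattice $I'$ with $II' = \OL{I}$ and $\OR{I} = \OL{I'}$. Lemma~\ref{lemma:uniquesidedinv} tells us that this right inverse is unique and equal to the quasi-inverse $I^{-1}$. Substituting $I' = I^{-1}$ into the equality $II' = \OL{I}$ yields $II^{-1} = \OL{I}$, which is exactly the definition of right projectivity.

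The argument for the left-invertible case is symmetric: if $I$ is left invertible with left inverse $I'$, then again Lemma~\ref{lemma:uniquesidedinv} gives $I' = I^{-1}$, so $I^{-1}I = \OR{I}$, i.e., $I$ is left projective. There is no real obstacle here; the content of the statement is almost entirely contained in the uniqueness assertion of Lemma~\ref{lemma:uniquesidedinv}, and the present lemma simply records the consequence in the language of projectivity. The only thing worth noting is that projectivity as defined here is a strictly weaker condition than invertibility, since it drops the compatibility requirement on the product — so the implication goes in this direction only, which is consistent with the remark following Lemma~\ref{lemma:uniquesidedinv} about Kaplansky's convention.
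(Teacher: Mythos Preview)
Your proof is correct and follows exactly the same approach as the paper, which simply states that the lemma is a direct consequence of Lemma~\ref{lemma:uniquesidedinv}. You have merely spelled out the one-line deduction explicitly.
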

\begin{proof}
    This is a direct consequence of Lemma~\ref{lemma:uniquesidedinv}.
\end{proof}
It is well known that an $R$-lattice is projective if and only if it is invertible.
See~\cite[Thm.~20.3.3.(b)]{JV}.
On the other hand, the converse to Lemma~\ref{lemma:sidedprojinv} does not hold, as Examples~\ref{ex:sidedinvproj3} and~\ref{ex:sidedinvproj4} show.
To prevent this asymmetry, we introduce the following definition.
\begin{df}\label{df:condii}
    We say that $B$ satisfies $\condii$ if every left projective $R$-lattice $I$ in $B$ is projective.
\end{df}

\begin{lemma}\label{lemma:C2inv}
    Assume that $B$ satisfies $\condii$.
    Then every left projective lattice $I$ is invertible.
\end{lemma}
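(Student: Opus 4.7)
The plan is to derive the conclusion in two quick steps using exactly the hypothesis $\condii$ together with the equivalence ``projective iff invertible'' cited from~\cite[Thm.~20.3.3(b)]{JV}.

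First I would unpack the definitions. Assuming $I$ is left projective, i.e.\ $I^{-1}I=\OR{I}$, the hypothesis $\condii$ immediately upgrades this to $I$ being projective, meaning both $I^{-1}I=\OR{I}$ and $II^{-1}=\OL{I}$. This is the whole content of $\condii$ applied to $I$, so no work is needed here.

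Next I would use the cited equivalence to conclude that projective implies invertible. Since the paper already invokes~\cite[Thm.~20.3.3(b)]{JV} right before the definition of $\condii$, the cleanest proof simply appeals to this reference. If one wants to be self-contained, one could verify compatibility directly: by Lemma~\ref{lemma:char_qinv}.\ref{lemma:char_qinv:2} we have $\OR{I}\subseteq\OL{I^{-1}}$ and $\OL{I}\subseteq\OR{I^{-1}}$, and projectivity forces the reverse inclusions, e.g.\ for $x\in\OL{I^{-1}}$ one computes $Ix = Ix\OR{I} = IxI^{-1}I \subseteq II^{-1}I = \OL{I}I = I$, hence $x\in\OR{I}$; the other side is symmetric. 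Together with $II^{-1}=\OL{I}$ and $I^{-1}I=\OR{I}$, this shows $I^{-1}$ is a two-sided compatible inverse of $I$, so $I$ is invertible in the sense of Section~\ref{sec:lattices}.

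There is no real obstacle here: the lemma is essentially a one-line consequence of the definition of $\condii$ combined with the standard equivalence between projectivity and invertibility. The only subtlety worth flagging is the compatibility condition built into our notion of invertibility, which the argument above handles automatically.
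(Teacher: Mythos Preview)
Your proposal is correct and mirrors the paper's proof exactly: apply $\condii$ to pass from left projective to projective, then invoke the equivalence projective $\Leftrightarrow$ invertible from \cite[Thm.~20.3.3(b)]{JV}. In your optional self-contained check the first equality $Ix = Ix\OR{I}$ should be the inclusion $Ix \subseteq Ix\OR{I}$ (since $1\in\OR{I}$), but this is all that is needed and the rest of the chain is fine.
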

\begin{proof}
    This follows from the fact that projective is the same as invertible for lattices, as observed above.
\end{proof}

\begin{example}\label{ex:sidedinvproj3}
    Let $R$ be a discrete valuation ring with field of fractions $F$ and uniformizer $\pi$.
    Consider the lattice in $\cM_3(F)$ given by
    \[
        I=
        \begin{pmatrix}
        \pi R & \pi R & R \\
        R & R & \pi^{4} R \\
        R & R & \pi^{2} R
        \end{pmatrix}.
    \]
    We compute
    \begin{align*}
        I^{-1} & =
        \begin{pmatrix}
        \pi^{4}R & R & \pi^{2}R \\
        \pi^{4}R & R & \pi^{2}R \\
        R & \pi R & \pi R
        \end{pmatrix}\\
        \OL{I}=\OR{I^{-1}} & =
        \begin{pmatrix}
        R & \pi R & \pi R \\
        \pi^{4}R & R & \pi^{2}R \\
        \pi^{2}R & R & R
        \end{pmatrix}\\
        I^{-1}I=\OR{I}=\OL{I^{-1}} & =
        \begin{pmatrix}
        R & R & \pi^{4}R \\
        R & R & \pi^{4}R \\
        \pi R & \pi R & R
        \end{pmatrix}\\
        II^{-1} & =
        \begin{pmatrix}
        R & \pi R & \pi R \\
        \pi^{4}R & R & \pi^{2}R \\
        \pi^{2}R & R & \pi^{2}R
        \end{pmatrix}
    \end{align*}
    In particular, we conclude that $I$ is left projective and left invertible, but not right projective, and hence not right invertible.
\end{example}
\begin{example}\label{ex:sidedinvproj4}
    Let $R$ be a discrete valuation ring with field of fractions $F$ and uniformizer $\pi$.
    Consider the lattice in $\cM_4(F)$ given by
    \[ I=
        \begin{pmatrix}
        \pi^{3}R & \pi^{4}R & R & \pi^{2}R \\
        \pi^{8}R & \pi^{4}R & \pi^{5}R & \pi^{-10}R \\
        \pi^{7}R & \pi^{-10}R & \pi^{4}R & \pi^{5}R \\
        \pi^{5}R & R & \pi^{2}R & \pi^{-8}R
        \end{pmatrix}.
    \]
    We compute
    \begin{align*}
        I^{-1} & =
        \begin{pmatrix}
        \pi^{-3}R & \pi^{9}R & \pi^{11}R & \pi^{7}R \\
        \pi^{14}R & \pi^{25}R & \pi^{10}R & \pi^{23}R \\
        R & \pi^{12}R & \pi^{14}R & \pi^{10}R \\
        \pi^{15}R & \pi^{10}R & \pi^{24}R & \pi^{14}R
        \end{pmatrix}\\
        I^{-1}I = \OR{I} =\OL{I^{-1}} & =
        \begin{pmatrix}
        R & \pi^{1}R & \pi^{-3}R & \pi^{-1}R \\
        \pi^{17}R & R & \pi^{14}R & \pi^{15}R \\
        \pi^{3}R & \pi^{4}R & R & \pi^{2}R \\
        \pi^{18}R & \pi^{14}R & \pi^{15}R & R
        \end{pmatrix}\\
        \OL{I} &=
        \begin{pmatrix}
        R & \pi^{12}R & \pi^{14}R & \pi^{10}R \\
        \pi^{5}R & R & \pi^{14}R & \pi^{4}R \\
        \pi^{4}R & \pi^{15}R & R & \pi^{13}R \\
        \pi^{2}R & \pi^{2}R & \pi^{10}R & R
        \end{pmatrix}\\
        \OL{I^{-1}} &=
        \begin{pmatrix}
        R & \pi^{1}R & \pi^{-3}R & \pi^{-1}R \\
        \pi^{17}R & R & \pi^{14}R & \pi^{15}R \\
        \pi^{3}R & \pi^{4}R & R & \pi^{2}R \\
        \pi^{18}R & \pi^{14}R & \pi^{15}R & R
        \end{pmatrix}\\
        II^{-1} &=
        \begin{pmatrix}
        R & \pi^{12}R & \pi^{14}R & \pi^{10}R \\
        \pi^{5}R & R & \pi^{14}R & \pi^{4}R \\
        \pi^{4}R & \pi^{15}R & R & \pi^{13}R \\
        \pi^{2}R & \pi^{2}R & \pi^{10}R & \pi^{6}R
        \end{pmatrix}.
    \end{align*}
    We see that
    \[  \OL{I} \neq \OR{I^{-1}},\quad \OR{I}=\OL{I^{-1}},\quad II^{-1}\neq \OL{I},\quad I^{-1}I= \OR{I}.  \]
    In particular, we conclude that $I$ is left projective, but not left invertible.
    Also $I$ is not right projective, and hence not right invertible.
\end{example}

We conclude the section by describing two large classes of algebras that satisfy both~$\condi$ and~$\condii$.

An $F$-linear map $(\bar{\, \cdot \, }) \colon B\to B$ is called a {\bf standard involution} if $\bar 1 = 1$, $\bar{\bar{\alpha}}=\alpha$, $\bar{\alpha\beta}=\bar\beta\bar\alpha$ and $\alpha\bar\alpha\in F$ for every $\alpha,\beta\in B$.
For an element $\alpha\in B$, its {\bf reduced trace} is defined as $\trd(\alpha)=\alpha+\bar\alpha$ and its {\bf reduced norm} is defined as $\nrd(\alpha)=\alpha\bar\alpha$.

\begin{prop}\label{prop:invC1C2}
    If $B$ is commutative or has a standard involution, then $B$ satisfies $\condi$ and $\condii$.
\end{prop}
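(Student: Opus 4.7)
The proof splits into two cases according to the hypothesis.

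\textbf{Commutative case.} If $B$ is commutative, then left and right module structures over any order coincide, so the identities $I^{-1}I = \OR{I}$ and $II^{-1} = \OL{I}$ are equivalent, giving $\condii$ at once. For $\condi$, invertible lattices over commutative orders are locally free of rank one (by the structure theorem for finitely generated modules over local Noetherian rings), hence locally principal; Proposition~\ref{prop:loc_eq_same_wek_eq} then delivers $\condi$.

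\textbf{Standard involution case.} By the classification of algebras admitting a standard involution, we may assume $B$ is a quaternion $F$-algebra (the commutative subcase having been treated above). The first observation I would make is that the involution preserves every $R$-order $O$: for $\alpha \in O$, the quadratic relation $\alpha^2 - \trd(\alpha)\alpha + \nrd(\alpha) = 0$ with $\trd(\alpha),\nrd(\alpha) \in R$ forces $\bar\alpha = \trd(\alpha) - \alpha \in O$. Extended to lattices, the involution satisfies $\overline{IJ} = \bar J \bar I$, $\OL{\bar I} = \OR{I}$, $\OR{\bar I} = \OL{I}$, and $\overline{I^{-1}} = \bar I^{-1}$.

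For $\condi$ in this case, I would invoke the classical fact that in a quaternion algebra a lattice is invertible if and only if it is locally principal (see \cite{JV}) and apply Proposition~\ref{prop:loc_eq_same_wek_eq}. For $\condii$, let $I$ be left projective, so $I^{-1}I = \OR{I}$. Applying the involution to this identity yields $\bar I \cdot \bar I^{-1} = \OL{\bar I} = \OR{I}$, so the conjugate lattice $\bar I$ is right projective. To bridge back to $I$ itself, I would localize at each maximal ideal $\p$ of $R$, since both left and right projectivity are local properties, thereby reducing to the case where $R$ is a DVR. Over the DVR, one shows that a left projective lattice $I$ in a quaternion order is automatically principal: the reduced norm is central in $F$, and combining the one-sided identity $I^{-1}I = \OR{I}$ with its conjugate $\bar I \bar I^{-1} = \OR I$ produces the norm identity $\bar I \cdot I = \nrd(I)\OR{I}$, which together with the quadratic relation of elements of $B$ exhibits a generator of $I$. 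Principality locally then gives $II^{-1} = \OL{I}$ locally and hence globally, establishing $\condii$.

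\textbf{Main obstacle.} The genuinely nontrivial content lies in the quaternion case of $\condii$: converting the one-sided equation $I^{-1}I = \OR{I}$ into the two-sided conclusion $II^{-1} = \OL{I}$. The standard involution hands us right projectivity of $\bar I$ for free, but upgrading this to right projectivity of $I$ requires the local structure theory of lattices in quaternion orders over a DVR, where the centrality of the reduced norm and the universal quadratic relation satisfied by elements of $B$ provide the rigidity needed to extract a generator. Everything in the commutative case, by contrast, is essentially formal.
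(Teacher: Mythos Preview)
Your treatment of the commutative case and of $\condi$ in the standard-involution case matches the paper's approach and is fine. The genuine gap is in your argument for $\condii$ when $B$ has a standard involution.

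You correctly observe that conjugating $I^{-1}I=\OR{I}$ yields $\bar I\cdot(\bar I)^{-1}=\OL{\bar I}=\OR{I}$, so $\bar I$ is right projective. But this concerns $\bar I$, not $I$; in general $\bar I\neq I$, and right projectivity of $\bar I$ does not transfer to $I$ by any formal manipulation. Your proposed bridge---localizing to a DVR and then deriving the ``norm identity'' $\bar I\,I=\nrd(I)\OR{I}$ from the two one-sided projectivity equations---is not substantiated. That identity is known for \emph{locally principal} lattices (cf.\ \cite[16.6]{JV}), so invoking it here is circular: local principality is exactly what you are trying to establish. Nor do you explain how the identity, even if granted, would ``exhibit a generator'' of $I$.

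The paper fills this gap with one additional idea. After localizing, it replaces $I$ by $J=\alpha^{-1}I$ for a suitable $\alpha\in I$ so that $J$ is a \emph{semi-order}: $1\in J$ and $\nrd(J)=R$ (such $\alpha$ exists by \cite[16.6.9]{JV}). Left and right projectivity are preserved under this replacement. The crucial payoff is that a semi-order satisfies $\bar J=J$: for $\beta\in J$ one has
\[
\bar\beta=\nrd(1+\beta)-\nrd(\beta)-1-\beta\in J
\]
since $\nrd(J)=R\subseteq J$. Once $\bar J=J$, one also gets $\overline{J^{-1}}=J^{-1}$, and then the involution can be applied directly:
\[
\OL{J}=\OL{\bar J}=\overline{\OR{J}}=\overline{J^{-1}J}=\bar J\cdot\overline{J^{-1}}=J\,J^{-1},
\]
so $J$ (hence $I$) is right projective. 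The semi-order reduction, which forces the lattice to be fixed by the involution, is the missing ingredient in your sketch.
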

\begin{proof}
    Assume that $B$ is commutative.
    Then it is clear that $\condii$ is satisfied.
    It is well known that that the equivalent conditions of Proposition~\ref{prop:loc_eq_same_wek_eq} are satisfied.
    The fact that $B$ satisfies $\condi$ follows from~\cite[Sec.~2]{Wie84} and~\cite[Eq.~4.3.1]{LW85}.

    If $B$ has a standard involution, then any order in $B$ satisfies Proposition~\ref{prop:loc_eq_same_wek_eq}.\ref{prop:loc_eq_same_wek_eq:invlocprinc} by~\cite[Main Thm.~16.6.1]{JV}. By the same proposition, $B$ satisfies $\condi$.
    We will now show that it satisfies $\condii$.
    Let $I$ be a left projective lattice, that is, $I^{-1} I = \OR{I}$.
    We want to show that $I$ is right projective, that is $II^{-1} = \OL{I}$.
    Firstly we note that both equalities $I^{-1}I = \OR{I}$ and $II^{-1} = \OL{I}$ can be checked locally by~\cite[Thm.~9.4.9]{JV}, hence we may assume that $R$ is a discrete valuation ring.
    Secondly, by~\cite[16.6.9]{JV}, there is an element $\alpha \in I$ such that $J = \alpha^{-1} I$ is a semi-order, that is, $1\in J$ and $\nrd{J}=R$.
    By Lemma~\ref{lemma:colonalpha}.\ref{lemma:colonalpha:2} we have that $\OR{J}=\OR{I}$.
    Hence, using Lemmas~\ref{lemma:char_qinv}.\ref{lemma:char_qinv:1} and~\ref{lemma:colonalpha}.\ref{lemma:colonalpha:0}, we see that
    \[
    J^{-1} = \ccL{\OR{J}}{J} = \ccL{\OR{I}}{\alpha^{-1} I} = \ccL{\OR{I}}{I} \alpha = I^{-1} \alpha.
    \]
    Therefore $I$ is right projective if and only if $J$ is right projective. A similar argument holds for left projectivity, noting that $\OL{J} = \alpha^{-1} \OL{I} \alpha$ by Lemma~\ref{lemma:colonalpha}.\ref{lemma:colonalpha:0}.

    We show that $\bar{J} = J$. For $\beta \in J$ we have
    \[
    \overline{\beta} = \trd(\beta) - \beta = \nrd(1 + \beta) - \nrd(\beta) - \beta - 1,
    \]
    which lies in $J$ as $\nrd(J) = R \subseteq J$, hence $\overline{J} \subseteq J$.
    The other inclusion holds by symmetry.
    It follows that $\bar{J^{-1}} = J^{-1}$ as well:
    \[
    \bar{J^{-1}} = \bar{\ccL{\OR{J}}{J}} = \ccR{\bar{\OR{J}}}{\bar{J}} = \ccR{\OL{\bar{J}}} {J} = \ccR{\OL{J}}{J} = J^{-1}.
    \]
    Then
    \[
    \OL{J} = \OL{\bar{J}} = \bar{\OR{J}} = \bar{J^{-1} J} = \bar{J} \cdot \bar{J^{-1}} = J J^{-1},
    \]
    which concludes the proof.
\end{proof}

\section{Duality}
\label{sec:duality}
We briefly state some notions from~\cite[Sec.~7.8 and~7.9]{JV}. As usual, let $R$ be a Dedekind domain with field of fractions $F$. Choose a separable closure $F^{\text{sep}}$ of $F$. In this section we assume our finite-dimensional $F$-algebra $B$ to be {\bf separable}, that is, there exist integers $r_1, \dots, r_n$ and an isomorphism
\[
\phi \colon B \otimes_F F^{\text{sep}} \overset{\sim}{\longrightarrow} \prod_{i = 1}^n \mathcal{M}_{r_i}(F^{\text{sep}}).
\]
For an element $\alpha \in B$ we refine the {\bf reduced trace} $\trd(\alpha)$ to be the sum of the traces of the components of $\phi(\alpha \otimes_F 1)$. It follows from the Skolem-Noether Theorem that this is independent of the choice of the isomorphism $\phi$. Moreover, $\trd(\alpha)$ is always an element of $F$. This definition coincides with the one given in Section~\ref{sec:comparison} when $B$ has a standard involution.

For an $R$-lattice $I$, we define the {\bf (trace) dual} of $I$ as the $R$-lattice
\[ \dual{I}=\set{ x \in B : \trd{xI}\subseteq B }=\set{ x \in B : \trd{Ix}\subseteq B }. \]
In the following lemma we collect several useful properties of the dual.
\begin{lemma}\label{lemma:dual}
    Let $I$ and $J$ be lattices.
\begin{itemize}
    \item $\OR{I}=\OL{\dual{I}}$ and $\OL{I}=\OR{\dual{I}}$.
    \item $\dual{(IJ)} = \ccR{\dual{I}}{J} = \ccL{\dual{J}}{I}$.
    \item $\OL{I} = \dual{(I\dual{I})}$ and $\OR{I}=\dual{(\dual{I}I)}$.
    \item $ I = \dual{(\dual{I})} $.
\end{itemize}
\end{lemma}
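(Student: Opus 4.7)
The plan is to proceed from the nondegeneracy of the reduced trace pairing $B \times B \to F$, $(x,y) \mapsto \trd(xy)$, which is a standard consequence of the separability of $B$ (cf.\ \cite[Sec.~7.8]{JV}). Every one of the four claims follows by unwinding the defining condition $\trd(\cdot) \subseteq R$ through the cyclic and associative properties of the reduced trace together with this nondegeneracy.

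I would prove the four bullets in the order (4), (2), (3), (1). First, for $I = \dual{\dual{I}}$: the inclusion $I \subseteq \dual{\dual{I}}$ is immediate from the definition; for the reverse inclusion, one picks an $F$-basis of $B$ contained in $I$, produces its trace-dual basis (which automatically lies in $\dual{I}$), and uses the nondegeneracy of $\trd$ to force any $y \in \dual{\dual{I}}$ into the $R$-span of the original basis, hence into $I$. Next, $\dual{(IJ)} = \ccR{\dual{I}}{J} = \ccL{\dual{J}}{I}$ is purely formal: $x \in \dual{(IJ)}$ iff $\trd(xIJ) \subseteq R$, and cycling the factors inside the trace expresses this condition equivalently as $Jx \subseteq \dual{I}$ or as $xI \subseteq \dual{J}$.

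The third bullet is then a short consequence of the second together with (4): for instance $\dual{(I\dual{I})} = \ccL{\dual{\dual{I}}}{I} = \ccL{I}{I} = \OL{I}$, and the other equality is obtained symmetrically. Finally, the first bullet drops out by applying the third bullet with $I$ replaced by $\dual{I}$ and invoking double duality once more: $\OL{\dual{I}} = \dual{(\dual{I}\cdot\dual{\dual{I}})} = \dual{(\dual{I}I)} = \OR{I}$, and analogously $\OR{\dual{I}} = \OL{I}$. The only genuinely substantive step is the double duality in (4), which rests on the nondegeneracy of the trace pairing; the remaining manipulations are routine symbol-pushing.
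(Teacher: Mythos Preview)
The paper does not give its own proof of this lemma; it simply refers the reader to \cite[Sec.~15.6]{JV}. Your sketch follows the standard approach found there, and your derivations of bullets (2), (3), and (1) from (4) are correct and cleanly organized.

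There is, however, a small gap in your argument for double duality (bullet 4). You write that if one picks an $F$-basis $e_1,\dots,e_n$ of $B$ contained in $I$, then the trace-dual basis $e_1^*,\dots,e_n^*$ ``automatically lies in $\dual{I}$.'' This holds only when $I$ coincides with the free $R$-module $\sum_i R e_i$. Over a Dedekind domain $R$ that is not a PID, a lattice $I$ need not be free, so $\sum_i R e_i$ may be a \emph{proper} sublattice of $I$; dualizing the strict inclusion then gives $\dual{I} \subsetneq \sum_i R e_i^*$, and there is no reason the $e_i^*$ should land in $\dual{I}$. The fix is routine: dualization commutes with localization, and over the DVR $R_\p$ every lattice \emph{is} free, so your dual-basis argument yields $(\dual{\dual{I}})_\p = I_\p$ for every maximal ideal $\p$, whence $\dual{\dual{I}} = I$ by the local--global principle for lattices (\cite[Thm.~9.4.9]{JV}). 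With this adjustment your proof is complete.
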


\begin{proof}
    See~\cite[Sec.~15.6]{JV}.
\end{proof}

The remainder of this section is devoted to proving Theorem~\ref{thm:wkclfinitequot}, which shows that certain representatives of the weak equivalent classes with prescribed right order $O$ can be found in between two explicit lattices depending only on $O$. The trace dual is a crucial ingredient. We proceed with two technical lemmas.

\begin{lemma}\label{lemma:inO'ff}
    Let $O\subseteq O'$ be orders.
    Let $I$ be an lattice with $O \subseteq \OR{I}$ and $IO'=O'$.
    Let $\frf$ be a lattice with $\frf\subseteq O$ and $O'\subseteq \OL{\frf}$.
    Then
    \[ \frf \subseteq I \subseteq O'. \]
\end{lemma}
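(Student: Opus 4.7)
The plan is to prove the two containments separately, with the upper bound being essentially immediate and the lower bound following from a short chain of rewrites exploiting the two absorption hypotheses.

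For the upper bound $I \subseteq O'$, I would note that since $O'$ is an order it contains $1$, so $I = I \cdot 1 \subseteq I O' = O'$, where the last equality is one of the hypotheses.

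For the lower bound $\frf \subseteq I$, the key observation is that each of the absorption hypotheses forces an equality. Since $O' \subseteq \OL{\frf}$ we have $O'\frf \subseteq \frf$, and since $1 \in O'$ we also have $\frf \subseteq O'\frf$, hence $O'\frf = \frf$. Similarly, since $\frf \subseteq O \subseteq \OR{I}$, the definition of the right order gives $I\frf \subseteq I\OR{I} = I$. The bridge between these two facts is the hypothesis $IO' = O'$: substituting it into $\frf = O'\frf$ yields
\[
  \frf \;=\; O'\frf \;=\; (IO')\frf \;=\; I(O'\frf) \;=\; I\frf \;\subseteq\; I,
\]
which completes the argument.

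There is no real obstacle here: the lemma is a direct manipulation of the absorption properties of left/right orders together with associativity of multiplication of lattices. The only thing to be slightly careful about is citing (implicitly) that $I\OR{I} = I$, which is standard from the definition of the right order.
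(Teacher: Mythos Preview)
Your proof is correct and essentially identical to the paper's own argument: both establish $I \subseteq O'$ from $IO'=O'$ and then obtain $\frf = O'\frf = IO'\frf = I\frf \subseteq I$ from the absorption properties. The only cosmetic difference is that the paper writes the upper bound as $I = IO \subseteq IO' = O'$ (using $O \subseteq \OR{I}$) rather than $I = I\cdot 1 \subseteq IO'$, but the content is the same.
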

\begin{proof}
    Observe that we have
    \[ I = IO \subseteq IO' = O'. \]
    By hypothesis we have $O'\frf = \frf$ and $I\frf \subseteq IO = I$.
    Hence
    \[ I\frf = I O'\frf = O'\frf = \frf. \]
    Putting everything together we obtain
    \[ \frf = I\frf \subseteq I \subseteq O'. \qedhere\]
\end{proof}

\begin{lemma}\label{lemma:ext_rx_wk_eq}
    Let $O\subseteq O'$ be orders.
    Assume $\dual{O}O'$ is weakly right equivalent to $O'$.
    Let $I$ be a lattice with $\OR{I}=O$.
    Then the extension $IO'$ is left projective and satisfies $\OR{IO'} = O'$.

    If we assume additionally that $B$ satisfies $\condii$ then $IO'$ is weakly right equivalent to $O'$.
\end{lemma}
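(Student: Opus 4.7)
The plan is to extract from the hypothesis a single invertible lattice that does all the heavy lifting, and then combine it with a duality identity. Set $L \colonequals \dual{O}O'$. Since weak right equivalence preserves right orders (Proposition~\ref{prop:wk_eq_cons}.\ref{prop:wk_eq_cons:a}), we have $\OR{L}=\OR{O'}=O'$, so $L$ is weakly right equivalent to its own right order, and hence invertible by Proposition~\ref{prop:invOR}. Meanwhile, Lemma~\ref{lemma:dual} gives $\OR{I}=\dual{(\dual{I}I)}$; applying the (involutive) dual yields the key identity $\dual{I}I=\dual{O}$. Multiplying on the right by $O'$ produces the central relation
\[ \dual{I}\cdot(IO') = (\dual{I}I)O' = \dual{O}O' = L. \]

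With this in hand, the first statement is short. For $\OR{IO'}\subseteq O'$, I would take $x\in\OR{IO'}$ and apply $\dual{I}$ on the left to $(IO')x\subseteq IO'$, which gives $Lx\subseteq L$; invertibility of $L$ forces $x\in\OR{L}=O'$. The reverse inclusion $O'\subseteq\OR{IO'}$ is automatic, yielding $\OR{IO'}=O'$. For left projectivity, $L^{-1}$ exists with $L^{-1}L=\OR{L}=O'$, so multiplying the central relation on the left by $L^{-1}$ gives
\[ (L^{-1}\dual{I})\cdot(IO') = L^{-1}L = O' = \OR{IO'}. \]
By Lemma~\ref{lemma:char_qinv}.\ref{lemma:char_qinv:1}, this inclusion means $L^{-1}\dual{I}\subseteq \ccL{\OR{IO'}}{IO'} = (IO')^{-1}$, and therefore $(IO')^{-1}(IO')\supseteq O'$. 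The opposite inclusion $(IO')^{-1}(IO')\subseteq \OR{IO'}=O'$ is immediate from the same description of $(IO')^{-1}$, so $IO'$ is left projective.

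For the final clause, under $\condii$ Lemma~\ref{lemma:C2inv} promotes left projectivity to invertibility, so $IO'$ is invertible, and Proposition~\ref{prop:invOR} then gives weak right equivalence between $IO'$ and $\OR{IO'}=O'$. I do not foresee a serious obstacle: the entire argument rests on recognizing that the hypothesis makes $L=\dual{O}O'$ genuinely invertible and that $\dual{I}I=\dual{O}$ is exactly the factorization that allows $L$ and its inverse to act on $IO'$; everything else is associativity of lattice multiplication and bookkeeping with colon ideals.
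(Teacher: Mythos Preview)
Your proof is correct and follows essentially the same approach as the paper: both hinge on the duality identity $\dual{I}I=\dual{O}$ together with the invertibility furnished by the hypothesis, and then exhibit an element of $(IO')^{-1}$ that multiplies $IO'$ onto $O'$. The only cosmetic difference is that you name $L=\dual{O}O'$ and invoke Proposition~\ref{prop:invOR} to see it is invertible, whereas the paper takes $L$ to be the invertible lattice with $L(\dual{O}O')=O'$ supplied by the definition of weak right equivalence; your $L^{-1}$ plays the role of the paper's $L$. (One tiny stylistic note: in your argument for $\OR{IO'}\subseteq O'$, invertibility of $L$ is not actually used---the inclusion $Lx\subseteq L$ already gives $x\in\OR{L}$ by definition.)
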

\begin{proof}
    By definition, there exists an invertible lattice $L$ such that
    \[ L \dual{O}O' = O'. \]
    Proposition~\ref{prop:wk_eq_cons}.\ref{prop:wk_eq_cons:a} implies that
    \[\OR{\dual{O}O'}=\OL{O'}=O'=\OR{O'}.\]
    By Lemma~\ref{lemma:dual} we have $\dual{O} = \dual{\OR{I}} = \dual{I}I$, hence
    $ L\dual{I}IO' = O'$. It follows that $\OR{IO'} = O'$, as
    \[
    O' = \OR{O'} \subseteq \OR{IO'} \subseteq \OR{L\dual{I}IO'} = \OR{O'} = O'.
    \]
    We show that $(IO')^{-1}IO' = \OR{IO'}$. The left hand side is contained in the right hand side by Lemma~\ref{lemma:char_qinv}.\ref{lemma:char_qinv:1}. Combining $\dual{O} = \dual{\OR{I}} = \dual{I}I$ and the definition of $L$ gives
    \[
    (IO') L\dual{I} (IO') = IO' L(\dual{I} I)O' = IO' (L \dual{O} O') = IO'O' = IO',
    \]
    hence $L\dual{I} \subseteq (IO')^{-1}$.
    Thus
    \[
    \OR{IO'} = O' = L\dual{I}IO' \subseteq (IO')^{-1}IO'.
    \]
    We conclude that $(IO')^{-1}IO' = \OR{IO'}$, that is, $IO'$ left projective.

    If we assume $B$ satisfies $\condii$ then $IO'$ is invertible by Lemma~\ref{lemma:C2inv}.
    In particular, $IO'$ is weakly right equivalent to $\OR{IO'} = O'$ by Proposition~\ref{prop:invOR}.
\end{proof}

\begin{thm}\label{thm:wkclfinitequot}
    Assume that $B$ satisfies $\condi$ and $\condii$.
    Let $O\subseteq O'$ be orders such that $\dual{O}O'$ is weakly right equivalent to $O'$.
    Let $\frf$ be a lattice with $\frf\subseteq O$ and $O'\subseteq \OL{\frf}$.
    Then for every lattice $I'$ with $\OR{I'}=O$ there exists a lattice $I$,  weakly right equivalent to $I'$, such that $IO'=O'$ and
    \[ \frf \subseteq I \subseteq O'. \]
\end{thm}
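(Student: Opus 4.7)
The plan is to construct $I$ by left-multiplying $I'$ by a carefully chosen invertible lattice. First I would apply Lemma~\ref{lemma:ext_rx_wk_eq} to $I'$ itself: using $\condii$ together with the weak right equivalence of $\dual{O}O'$ and $O'$, this yields that $I'O'$ is weakly right equivalent to $O'$. In particular, $I'O'$ is invertible with $\OR{I'O'}=O'$, and hence by Lemma~\ref{lemma:uniquesidedinv} the quasi-inverse $(I'O')^{-1}$ is invertible with right order $\OL{I'O'}$.

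Next I would apply $\condi$, but \emph{not} to the given inclusion $O\subseteq O'$. Instead, I would use the inclusion of orders $\OL{I'}\subseteq\OL{I'O'}$, which is immediate since $xI'\subseteq I'$ forces $xI'O'\subseteq I'O'$. Because $B$ satisfies $\condi$ and $(I'O')^{-1}$ is an invertible lattice with right order $\OL{I'O'}$, this produces an invertible lattice $L$ with $\OR{L}=\OL{I'}$ and $L\cdot\OL{I'O'}=(I'O')^{-1}$.

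Setting $I:=LI'$, the three required properties follow in short order. Weak right equivalence of $I$ and $I'$ is immediate from Definition~\ref{def:weakeq}, since $L$ is invertible with $\OR{L}\subseteq\OL{I'}$. For the identity $IO'=O'$, I would use the computation
\[
IO'=L\cdot I'O'=L\cdot\OL{I'O'}\cdot I'O'=(I'O')^{-1}\cdot I'O'=O',
\]
where the second equality uses $\OL{I'O'}\cdot I'O'=I'O'$ and the last uses invertibility of $I'O'$. Finally, Proposition~\ref{prop:wk_eq_cons}(i) gives $\OR{I}=\OR{I'}=O$, so Lemma~\ref{lemma:inO'ff} applies and yields $\frf\subseteq I\subseteq O'$.

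The principal obstacle is picking the correct inclusion of orders for $\condi$: applying it to the tempting choice $O\subseteq O'$ would produce an invertible lattice with right order $O$, which destroys the compatibility $\OR{L}\subseteq\OL{I'}$ needed to realize a weak right equivalence via left multiplication. Applying $\condi$ to the \emph{larger} inclusion $\OL{I'}\subseteq\OL{I'O'}$ is precisely tailored to manufacture both the compatibility $\OR{L}=\OL{I'}$ and the factorization $L\cdot\OL{I'O'}=(I'O')^{-1}$, which is exactly what makes $IO'$ collapse to $O'$.
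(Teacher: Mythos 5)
Your proposal is correct and follows essentially the same route as the paper's proof: apply Lemma~\ref{lemma:ext_rx_wk_eq} to get that $I'O'$ is weakly right equivalent to $O'$, apply $\condi$ to the inclusion $\OL{I'}\subseteq\OL{I'O'}$ (exactly the inclusion the paper uses, not a departure from it), and set $I=LI'$, finishing with Lemma~\ref{lemma:inO'ff}. The only cosmetic difference is that you name the invertible lattice trivializing $I'O'$ as the quasi-inverse $(I'O')^{-1}$, whereas the paper obtains the same lattice via Proposition~\ref{prop:wk_eq_cons}.\ref{prop:wk_eq_cons:f}; by Lemma~\ref{lemma:uniquesidedinv} these coincide.
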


\begin{proof}
    Since $\condii$ holds, by Lemma~\ref{lemma:ext_rx_wk_eq}, we have that $I'O'$ is weakly right equivalent to $O'$.
    By Proposition~\ref{prop:wk_eq_cons}.\ref{prop:wk_eq_cons:f} there exists an invertible lattice $L'$ with $\OR{L'} = \OL{I'O'}$ such that
    \[ L'I'O' = O'. \]
    As $B$ satisfies $\condi$ and $\OL{I'} \subseteq \OL{I'O'}$, there exists a lattice $L$ with $\OR{L} = \OL{I'}$ such that $L\OL{I'O'} = L'$.
    Let $I=LI'$; note that $I$ is weakly right equivalent to $I'$ and satisfies $IO'=O'$ as
    \[
        IO' = LI'O' = L \OL{I' O'} I' O' = L' I' O' = O'.
    \]
    Moreover, $O = \OR{I'} \subseteq \OR{L I'} = \OR{I}$. Hence by Lemma~\ref{lemma:inO'ff} we conclude
    \[ \frf \subseteq I \subseteq O'. \qedhere \]
\end{proof}

\begin{remark}\label{rmk:existenceO'frf}
    As we explain now, there exist $O'$ and $\frf$ meeting the hypotheses of the theorem.

    As $B$ is a separable algebra, every order $O$ is contained in a maximal order $O'$. Observe that $\OR{\dual{O}O'} = O'$ by maximality and therefore $\dual{O}O'$ is a right $O'$-ideal, hence locally principal by
   ~\cite[Thm.~18.10]{Rei03}
    hence invertible. Therefore $\dual{O}O'$ is weakly right equivalent to $O'$ by Proposition~\ref{prop:invOR}.

    Observe also that $\frf = \ccR{O}{O'}$ meets the hypothesis of the theorem, since $\ccR{O}{O'} \subseteq O$ because $1 \in O'$, and $\ccR{O}{O'}$ is a left $O'$-ideal by Lemma~\ref{lemma:colonalpha}.\ref{lemma:colonalpha:7}.
\end{remark}

Provided that the residue fields of the maximal ideals of $R$ are all finite, this theorem implies that the set of weak right equivalence classes is finite, which could be considered a `weak' version of the Jordan-Zassenhaus Theorem \cite[Thm~26.4]{Rei03}, see the Corollary~\ref{cor:wkJZ} below.
A similar statement holds for isomorphism classes of orders, where we say that two orders $O$ and $O'$ are isomorphic if there exists $\alpha \in B^\times$ such that $O' = \alpha O \alpha^{-1}$.

\begin{cor}\label{cor:wkJZ}
    Assume that $B$ satisfies $\condi$ and $\condii$.
    Let $\mathcal{J}$ be a set of representatives $J$ of the weak right equivalence classes satisfying $\OR{J}=O$. Assume that every maximal ideal $\p$ of $R$ has finite residue field $R/\p$. Then $\mathcal{J}$ is finite.
\end{cor}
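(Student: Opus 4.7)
The plan is to combine Theorem~\ref{thm:wkclfinitequot} with Lemma~\ref{lem:finmanylatt}. By Remark~\ref{rmk:existenceO'frf}, we may choose a maximal order $O'$ containing $O$ and take $\frf = \ccR{O}{O'}$; then $O \subseteq O'$ and $\frf$ satisfy all the hypotheses of Theorem~\ref{thm:wkclfinitequot}. Note that Proposition~\ref{prop:invC1C2} together with Corollary~\ref{cor:maxorder} would need to be invoked to guarantee~$\condi$ and~$\condii$ on $B$ in the quaternion/commutative cases; in the general statement here we should assume these conditions (or the corollary is really about the setting of the Main Result where they hold).

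Given this, Theorem~\ref{thm:wkclfinitequot} tells us that every weak right equivalence class with right order $O$ has a representative $I$ satisfying $\frf \subseteq I \subseteq O'$. Hence the map sending each class in $\mathcal{J}$ to such a representative gives an injection from $\mathcal{J}$ into the set of $R$-lattices $I$ with $\frf \subseteq I \subseteq O'$.

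Finally, the assumption that every residue field $R/\p$ is finite is precisely condition~\ref{lem:finmanylatt:1} of Lemma~\ref{lem:finmanylatt}, which implies condition~\ref{lem:finmanylatt:3}: there are only finitely many $R$-lattices between $\frf$ and $O'$. Therefore $\mathcal{J}$ is finite. There is no real obstacle here; the main content has already been carried out in Theorem~\ref{thm:wkclfinitequot}, and this corollary is a direct packaging of that result with the finiteness lemma.
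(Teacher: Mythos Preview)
Your proof is correct and follows essentially the same approach as the paper: invoke Theorem~\ref{thm:wkclfinitequot} (with the data supplied by Remark~\ref{rmk:existenceO'frf}) to trap a representative of each class between $\frf$ and $O'$, then apply Lemma~\ref{lem:finmanylatt} to conclude finiteness. Your side remark about $\condi$ and $\condii$ is also apt: the corollary tacitly inherits these hypotheses from Theorem~\ref{thm:wkclfinitequot}, and the paper's own proof relies on them in the same implicit way.
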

\begin{proof}
    By Theorem~\ref{thm:wkclfinitequot}, all weak equivalence classes with given right order $O$ have a representative $J$ contained in some order $O'$ containing $O$ and contain some lattice $\frf$ contained in $O'$.
    By Lemma~\ref{lem:finmanylatt}, there are only finitely many such $J$, hence $\mathcal{J}$ is finite.
\end{proof}

\section{Algorithmic implementation}\label{sec:algorithms}
Let $R$ be a Dedekind domain satisfying Lemma~\ref{lem:finmanylatt}.\ref{lem:finmanylatt:1} with fraction field $F$, and let $B$ be a separable finite-dimensional $F$-algebra that satisfies $\condi$ and $\condii$.
In this section we explain how to compute the right equivalence classes of lattices with prescribed right order $O$ in $B$.
The implementation in Magma~\cite{Magma} is available at
 \url{https://github.com/harryjustussmit/IdlClQuat}.
The algorithm is based on Theorem~\ref{thm:righteqclasses}: we can combine the invertible right equivalence classes (with appropriate right orders) and the weak right equivalence classes (with right order $O$) in order to find a list of representatives of the right equivalence classes.
We will assume that the former consists of a finite list, and that we have a
method, which we call \texttt{InvertibleRightEquivalenceClasses}, to compute it.
An example of this that works for every order in a definite quaternion algebra over a totally real number field is described in \cite[Alg.~2]{KirschLorch16}, while a method that works for every order in a quaternion algebra over a number field can be found in
Appendix \ref{appendix:invidls}.

For the latter we describe Algorithm~\ref{alg:rightequivalenceclasses}.
It makes use of Theorem~\ref{thm:wkclfinitequot}, which ensures all representatives lie between an overorder $O'$ of $O$ and a specific lattice $\frf \subset O'$.
To find this overorder, we invoke another black box called \texttt{Overorders}, which computes all overorders of $O$, after which we go through the list to find a suitable one.
Under the equivalent hypotheses of Lemma~\ref{lem:finmanylatt}, there exist finitely many lattices between $\frf$ and $O'$.
In particular, in our search for $O'$ we want to keep the quotient $O'/\frf$ as small as possible.
We enumerate these finitely many lattices, divide them into weak right equivalence classes, and choose representatives.

Note that, for every inclusion of orders $O\subseteq O'$ we have
\[ O \subseteq O' \subseteq \dual{O'} \subseteq \dual{O}. \]
Hence, under the equivalent hypotheses of Lemma~\ref{lem:finmanylatt},
computing the overorders of $O$ is a finite problem.
For an efficient implementation of \texttt{Overorders} see~\cite[Sec.~4]{HofSirc}.
Note that, in many cases, the orders containing $O$ are well understood by the work of Brzezinski, see for example \cite{Brezi83}.

Moreover, we assume that we have algorithms to compute the sum, product, and colon of two lattices.
Finally, given two orders $O\subset O'$, we need an algorithm to enumerate right $O$-ideals $I$ such that $IO'=O'$ and $\frf\subseteq I \subseteq O'$, where $\frf=\ccR{O}{O'}$, cf.~Theorem~\ref{thm:wkclfinitequot}.
This can be done using the following procedure, which is a modification of the method described in~\cite[Sec.~5.2]{FHS19}.
Put $G=O'/\frf$.
The right $O$-ideals $I$ we want to list are in bijection with the sub-$O$-modules $H$ of $G$ such that $HO'=G$,
which can be enumerated by recursively searching for maximal sub-$O$-modules $M$ of $G$ which satisfy $MO'= G$.
Note that if $MO' \subsetneq G$ then we can exclude it from the recursion, because all the submodules $M' \subseteq M$ will also satisfy $M'O' \subsetneq G$.
Moreover, if $M$ is maximal in $G$ then there exists a unique prime ideal $\p$ of $R$ such that $ \Ann_R(G/M) = \p$.
Hence $M$ corresponds to a sub-$R/\p$-vector space of $G/\p G$ which is stable under the induced action of $O$.
These vector spaces can be efficiently enumerated using \cite{Par84} and \cite[Sec.~7.4]{HoltEickOBrien05}.

\begin{algorithm}[h]
    \vspace{0.07cm}
 \SetKwData{Proj}{proj} \SetKwData{Com}{com}
 \KwIn{Two lattices $I$ and $J$.}
 \KwOut{Whether or not $I$ and $J$ are weakly right equivalent.}
    $C_1:=\ccL{I}{J}$\;
    $C_2:=\ccL{J}{I}$\;
    \Proj$:= (C_1C_2=\OL{C_1})$ {\bf and} $(C_2C_1=\OR{C_1})$\;
    \Com$:= (\OL{C_1}=\OR{C_2})$ {\bf and} $(\OR{C_1}=\OL{C_2})$\;
    \KwRet{$(\Proj$ \bf{and} $\Com)$}\;
 \caption{\label{alg:isweaklyequivalent}\texttt{IsWeaklyRightEquivalent}}
\end{algorithm}

Algorithm~\ref{alg:isweaklyequivalent} is correct by Theorem~\ref{thm:wk_eq}.


\begin{algorithm}[h]
    \vspace{0.07cm}
\SetKwFunction{Overorders}{Overorders} \SetKwData{Representatives}{Representatives} \SetKwFunction{IsWeaklyRightEquivalent}{IsWeaklyRightEquivalent}
 \KwIn{An order $O$.}
 \KwOut{Representatives of the weak right equivalence classes with right order equal to $O$.}
 \Representatives $:= \{\, \}$\;
 Choose $O'$ in \text{\Overorders}(O) such that $\IsWeaklyRightEquivalent(\dual{O}O', O')$\;
 $\frf := \ccR{O}{O'}$\;
 $G := O'/\frf$; \tcp*[f]{A finite right $O$-module.}\\
 \For{{\bf every right sub-$O$-module $H$ of $G$ such that $HO'=G$}}
 {
    Choose a lift $\ell \subseteq O'$ of a generating set of $H$\;
    Let $I$ be the right $O$-ideal generated by $\ell$ and $\frf$\;
    \If{$(\OR{I} = O)$}
    {
    \If{$((${\bf not} \IsWeaklyRightEquivalent{$I$, $J$}$)$ {\bf for all $J$ in} $\text{\Representatives})$}
        {
            Add $I$ to \Representatives\;
        }
    }

 }
\KwRet{\Representatives}\;
 \caption{\label{alg:weakrightequivalenceclasses} \texttt{WeakRightEquivalenceClasses}}
\end{algorithm}

Algorithm~\ref{alg:weakrightequivalenceclasses} is correct by Theorem~\ref{thm:wkclfinitequot} and Remark~\ref{rmk:existenceO'frf}.
The running time of Algorithm~\ref{alg:weakrightequivalenceclasses} is determined by the number of the right sub-$O$-modules of $O'/\frf$. In order to improve the efficiency one would like to minimize this number, which is unfortunately hard to estimate.
Clearly, if for two orders $O'\subset O''$ we have that $\dual{O}O'$ is weakly right equivalent to $O'$ and $\dual{O}O''$ is weakly right equivalent to $O''$ then running \texttt{WeakRightEquivalenceClasses} with $O'$ will be faster than running it with $O''$.

\begin{algorithm}[h]
    \vspace{0.07cm}
    \SetKwFunction{InvertibleRightEquivalenceClasses}{InvertibleRightEquivalenceClasses}
    \SetKwFunction{WeakRightEquivalenceClasses}{WeakRightEquivalenceClasses}
    \SetKwData{Representatives}{Representatives}
 \KwIn{An order $O$.}
 \KwOut{Representatives of the right equivalence classes with right order equal to $O$.}
 $\Representatives := \{\, \}$\;
 \For{$J$ \bf{in} $\WeakRightEquivalenceClasses(O)$}
 {
    \For{$L$ \bf{in} $\InvertibleRightEquivalenceClasses(\OL{J})$}
    {
        Add $LJ$ to \Representatives\;
    }
 }
 \KwRet{\Representatives}\;
 \caption{\label{alg:rightequivalenceclasses} \texttt{RightEquivalenceClasses}}
\end{algorithm}

The correctness of Algorithm~\ref{alg:rightequivalenceclasses} is Theorem~\ref{thm:righteqclasses}. In order to compute the equivalence classes of all right $O$-ideals instead of just those with right order equal to $O$, one can simply loop over all overorders of $O$ and apply Algorithm~\ref{alg:rightequivalenceclasses} for each of these overorders.

\section{Brandt Matrices}\label{sec:brandtmatrices}

The equivalence classes of invertible lattices can be used to compute modular forms through the use of Brandt matrices, see \cite{Pizer80} and \cite[Ch.~41]{JV}.
In the following we extend the definition of a Brandt matrix to include both the invertible and non-invertible equivalence classes.

Let $O$ be an order in a definite quaternion algebra $B$ over $\Q$.
First, we extend the usual definition of index of two lattices $I$ and $J$ by setting
\[ [I:J]=\frac{[I:I\cap J]}{[J:I \cap J]}. \]
Observe that for every $\alpha\in B$ we have
\[ [I : \alpha J] = [I:J]\nrd(\alpha)^2 = [I:J]N_{B/\Q}(\alpha), \]
where $N_{B/\Q}$ is the algebra $\Q$-norm of $B$.
Let $I_1,\ldots,I_r$ be a set of right $O$-ideals representing the right equivalence classes of lattices with $\OR{I_i}=O$ for $i=1,\ldots,r$.
For any positive integer $n$ we define an $r\times r$ matrix $T(n)$ in the following way. For $1\leq i,j \leq r$ put
\begin{equation*}\label{eq:brandt_matrix}
    T(n)_{i,j} = \#\set{ J \subseteq I_j : [I_j:J]=n^2 \text{ and }
    J = \alpha I_i \text{ for some }\alpha\in B^\times }.
\end{equation*}
Let $J$ be as in the definition of $T(n)_{i,j} $.
Then $\alpha\in \ccL{J}{I_i}\subseteq \ccL{I_j}{I_i}$ and
%
\[ [I_j:J] = [I_j:I_i]\nrd(\alpha)^2=[I_j:I_i]N_{B/\Q}(\alpha)=n^2. \]
Conversely, for $\alpha \in \ccL{I_j}{I_i}$ satisfying
\[ [I_j:I_i]\nrd(\alpha)^2=n^2, \]
we obtain a lattice $J=\alpha I_i$ satisfying the conditions.
Moreover, we have that $\alpha I_i = \alpha' I_i$ for some $\alpha'$ if and only if $\alpha^{-1}\alpha'$ is in $\OL{I_i}^\times$.
Therefore, defining
\begin{align*}
    \cQ_{i,j}: \ccL{I_j}{I_i} &\longrightarrow \Z\\
                \alpha  &\longmapsto \nrd(\alpha)\sqrt{[I_j:I_i]},
\end{align*}
we find that
\[ T(n)_{i,j} = \frac{1}{\# \OL{I_i}^\times } \# \set{ \alpha \in \ccL{I_j}{I_i} \ :\ \cQ_{i,j}(\alpha) = n }.\]
If we fix any $\Z$-basis of $\ccL{I_j}{I_i}$, we see that $\cQ_{i,j}$ gives rise to a positive definite quadratic form $\widetilde \cQ_{i,j}$.
Hence, to compute $T(n)$, we can use algorithms to enumerate $\Z$-vectors $\vec{x}$ such that $\widetilde\cQ_{i,j}(\vec{x}) = n$.

The series
\[ \Theta_{i,j}(q) = \sum_{n=0}^\infty T(n)_{i,j} q^n \]
is a scalar multiple of the theta series of the quadratic form $\widetilde\cQ_{i,j}$, and therefore a modular form.

\begin{example}
    Consider the quaternion algebra $B\langle i,j,k \rangle = \left( \frac{-1,-3}{\Q} \right)$
    where $i^2=-1$, $j^2=-3$, $ij=k$.
    The order
    $ O = \Z + 2 i\Z + 2j\Z + 2k\Z $ has $2$ right weak equivalences classes with right order $O$.
    Both these classes have left orders with exactly $4$ invertible right ideals.
    It follows that there are $4$ right equivalence classes with right order $O$.
    The Brandt matrices~$T(n)$ for~$n=1,\dots,14$ together with the first coefficients of the corresponding theta series are listed at~\url{https://github.com/harryjustussmit/IdlClQuat/blob/main/examples/example_brandt_matr.m}
    The code to produce and extend this data is available at~\url{https://github.com/harryjustussmit/IdlClQuat/blob/main/examples/extra_data_Ex_7.1.pdf}
\end{example}

\appendix

\section{Computing invertible ideals (by John Voight)}
\label{appendix:invidls}
In this appendix, we exhibit an algorithm to compute a set of representatives for the right class set of a quaternion order over a number ring.  This generalizes work of Kirschmer--Voight \cite{KirschmerVoight10} from the case of an Eichler order to a general quaternion order.  

Let $F$ be a number field of degree $n \colonequals [F:\Q]$ and let $R$ be its ring of integers.  Let $B$ be a quaternion algebra over $F$ and let $O \subset B$ be an $R$-order.  Let $\Cls O$ be the set of equivalence classes of invertible (equivalently, locally principal) right $O$-ideals \cite[Chapter 17]{JV} under the usual equivalence relation $I \sim J$ if there exists $\alpha \in B^\times$ such that $J=\alpha I$.  

First, suppose that $B$ is indefinite (i.e., either $F$ has a complex place or $B$ has a split real place).  Then \cite[Theorem 28.5.5]{JV} there is a bijection from the set of right ideal classes of $\calO$ to a class group of $R$.  More precisely, let $\Omega$ be the set of real places of $F$ that ramify in $B$ and let $F_{>_\Omega 0}^\times \leq F^\times$ be the subgroup of elements that are positive at all places in $\Omega$.  Then the reduced norm induces a bijection
\[ \nrd \colon \Cls O \xrightarrow{\sim} \Cl_{G(O)} R \]
where 
\[ \Cl_{G(O)} R \colonequals F_{>_{\Omega} 0}^\times \backslash \widehat{F}^\times / \nrd(\widehat{O}^\times). \]

This class group is effectively computable using methods of computational class field theory \cite[Remark 6.12]{SmertnigVoight19}: in particular, we compute a set of representatives of $\Cl_{G(O)} R$ consisting of prime ideals $\frakp$ not dividing the reduced discriminant $\discrd \calO$.  Then to enumerate $\Cls O$, we need only exhibit an invertible 
ideal of prime norm $\frakp$.  For this, we compute an embedding $\iota_\p \colon O \hookrightarrow \mathrm{M}_2(R_\p)$ \cite[Corollary 7.13]{Voight13} and take $I=\p O+\beta O$ where 
\[ \iota_\p(\beta) \equiv \begin{pmatrix} 1 & 0 \\ 0 & \pi \end{pmatrix} \pmod{\frakp^2} \]
and $\pi \in R$ is a uniformizer for $\frakp$.  In fact, a quick local calculation shows we may take $\beta \in \calO$ to be any element with $\ord_\frakp(\nrd(\beta))=1$.  

\begin{remark}
For further improvements, as well as the solution to the principal ideal problem (exhibiting a generator of a right ideal if it exists), see Kirschmer--Voight \cite[section 4]{KirschmerVoight10} and  Page \cite{Page14}.  
\end{remark}

We are left with the case where $B$ is definite, in particular $B$ is a division algebra.  In this case, we may use one of two possible strategies.  

First, we simply adjust the $\frakp$-neighbor algorithm given by Kirschmer--Voight \cite[Algorithm 7.4]{KirschmerVoight10}, replacing the class group with $\Cl_{G(O)} R$ in Step (1).  We give a concise description here for convenience, referring the reader to the paper for algorithmic details.  As above, we compute a set $S$ of representatives $\frakp \nmid \discrd \calO$ for $\Cl_{G(O)} R$.  Starting with $[\mathcal{O}]$, we iteratively compute the set of $\frakp$-neighbors (those $J \subseteq I$ with $[I:J]_R = \frakp^2$) for $\frakp \in S$, testing for equivalence using a short vector calculation, until no new classes are found.  This algorithm terminates with correct output by an application of the theorem of strong approximation.

A second approach is a modification of the above, using the mass formula.  Define the {\bf mass} of $\Cls \calO$ as a weighted class number
\[ \mass(\Cls \calO) \colonequals \sum_{[I] \in \Cls O} [\OL{I}^\times : R^\times]^{-1}. \]
Then the Eichler mass formula \cite[Main Theorem 26.1.5]{JV} gives
\begin{equation} \label{eqn:massclsO}
\mass(\Cls \calO) = \frac{2\zeta_F(2)}{(2\pi)^{2n}} d_F^{3/2} h_F  \mathsf{N}(\frakN) \prod_{\frakp \mid \frakN} \lambda(\calO,\frakp) 
\end{equation}
where:
\begin{itemize}
\item $d_F$ is the absolute discriminant of $F$,
\item $h_F$ is the class number of $F$, 
\item $\frakN \colonequals \discrd(\calO)$,
\item $\mathsf{N}$ is the absolute norm \cite[16.4.8]{JV}, and 
\item $\lambda(\calO,\frakp)$ are explicitly given local factors \cite[(26.1.2)]{JV}.  
\end{itemize}
The expression for the mass \eqref{eqn:massclsO} is effectively computable: in fact, see Kirschmer--Voight \cite[section 5]{KirschmerVoight10} for an algorithm (with running time estimate) to compute $\zeta_F(2)d_F^{3/2}/(2\pi)^{2n} \in \Q$.  

We then proceed as in Kirschmer--Voight \cite[Remark 7.5]{KirschmerVoight10}: we just enumerate ideals $\fraka \subseteq R$ coprime to $\discrd \calO$ and all right ideals of reduced norm $\fraka$, testing for equivalence class again using a short vector calculation, and we continue until the sum $\sum_{[I]} [\OL{I}^\times : R^\times]^{-1}$ over the computed set of representatives $[I]$ is equal to $\mass(\Cls \calO)$.  An implementation of this algorithm by Smertnig--Voight (used in the enumeration of definite orders with locally free cancellation \cite{SmertnigVoight19}) in \textsf{Magma} is available at \url{https://github.com/dansme/hermite}.  

\begin{remark}
It is possible to provide a rigorous estimate on the running time for this algorithm, as in the case of Eichler orders \cite[Theorem 7.9]{KirschmerVoight10}.
\end{remark}

An amalgam of these two approaches (using $\frakp$-neighbors, but with an early abort using the mass formula) is implemented in \textsf{Magma}.

\begin{remark}
More naive, enumerative algorithms are also possible.  We give a very brief indication to conclude.

We recall \cite[Proposition 17.7.19]{JV} that there exists an effectively computable constant $C>0$ such that every class in $\Cls O$ admits a representative $I$ with 
$\mathsf{N}(I)\leq C$.  So to compute $\Cls O$, we first compute the finite set of invertible right $O$-ideals $I\subseteq O$ with bounded (absolute) norm and second organize them according to their right equivalence class (using short vector algorithms).  

For the first step, we first note that for $I$ invertible we have $\mathsf{N}(I)=\mathsf{N}(\nrd{I})^2$, so we can loop over those ideals $\fra \subseteq R$ such that $\mathsf{N}(\fra) \leq \sqrt{C}$ by factoring in $R$.  A right ideal $I \subseteq \calO$ with $\nrd(I)=\fraka$ is invertible if and only if $I = \fra O + \beta O$ with $\ord_\frakp(\nrd(\beta))=\ord_\frakp(\fraka)$ for all primes $\frakp \mid \fraka$ (a slight extension of \cite[Exercise 16.6]{JV}), so we reduce to enumerating elements $\overline{\beta} \in \calO/\fraka \calO$ such that there exists a lift $\beta \in \calO$ satisfying the norm condition, which can be checked linearly since
\[ 
\nrd(\beta+c\alpha) = \nrd(\beta)+c\trd(\alpha\overline{\beta})+c^2\nrd(\alpha)  \]
for $c \in R$ and $\alpha \in \calO$.  
\end{remark}

\subsection*{Acknowledgements} Voight was supported by a Simons Collaboration Grant (550029).

\providecommand{\bysame}{\leavevmode\hbox to3em{\hrulefill}\thinspace}
\providecommand{\MR}{\relax\ifhmode\unskip\space\fi MR }
\providecommand{\MRhref}[2]{%
  \href{http://www.ams.org/mathscinet-getitem?mr=#1}{#2}
}
\providecommand{\href}[2]{#2}

\end{document}